\newtheorem{theorem}{Theorem}[section]
\newtheorem{proposition}{Proposition}[section]
\newtheorem{corollary}{Corollary}[section]
\newtheorem{lemma}{Lemma}[section]
\theoremstyle{definition}
\newtheorem{definition}{Definition}[section]
\theoremstyle{remark}
\newtheorem{remark}{Remark}
\DeclareMathOperator{\rank}{\text{\rm rank}}
\renewcommand{\tilde}{\widetilde}
\begin{document}

\title[Invariant jet differentials on projective hypersurfaces]{Existence of global invariant jet differentials on projective hypersurfaces of high degree}

\author{Simone Diverio}
\address{Istituto \lq\lq Guido Castelnuovo\rq\rq{},
SAPIENZA Universit\`a di Roma}
\email{diverio@mat.uniroma1.it} 
\address{Institut \lq\lq Fourier\rq\rq{},
Universit\'e de Grenoble I}
\email{sdiverio@fourier.ujf-grenoble.fr}

\maketitle

\begin{abstract}
Let $X\subset\mathbb P^{n+1}$ be a smooth complex projective hypersurface. In this paper we show that, if the degree of $X$ is large enough, then there exist global sections of the bundle of invariant jet differentials of order $n$ on $X$, vanishing on an ample divisor. We also prove a logarithmic version, effective in low dimension, for the log-pair $(\mathbb P^n,D)$, where $D$ is a smooth irreducible divisor of high degree. Moreover, these result are sharp, \emph{i.e.} one cannot have such jet differentials of order less than $n$.
\keywords{Kobayashi hyperbolicity, invariant jet differentials, algebraic holomorphic Morse inequalities, complex projective hypersurfaces, logarithmic variety, logarithmic jet bundle, Schur power.}
\end{abstract}

\section{Introduction}

Let $X$ be a compact complex manifold. According to a well-know criterion of Brody, $X$ is Kobayashi hyperbolic if and only if there are no non-constant entire holomorphic curves in $X$. In 1970, S. Kobayashi \cite{Kobayashi70} conjectured that if $X\subset\mathbb P^{n+1}$ is a generic hypersurface of degree \hbox{$d=\deg X$} at least equal to $2n+1$, then $X$ is Kobayashi hyperbolic (analogously, he proposed also the following logarithmic version of his conjecture: if $D\subset\mathbb P^n$ is a generic irreducible divisor of degree $\deg D\ge 2n+1$, then $\mathbb P^n\setminus D$ is Kobayashi hyperbolic). Thus proving the \lq\lq compact\rq\rq{} Kobayashi conjecture is equivalent to proving the non existence of entire holomorphic curves on a generic projective hypersurface of degree large enough.

Several decades after the pioneering work of Bloch in 1926, it has been realized that an essential tool for controlling the geometry of entire curves on a manifold $X$  is to produce differential equations on $X$ that every entire curve must satisfy. For instance, in 1979, Green and Griffiths \cite{G-G79} constructed the sheaf $\mathcal J_{k,m}$ of jet differentials of order $k$ and weighted degree $m$ and were able to prove the Bloch conjecture ({\it i.e.}, that every entire holomorphic curve in a projective variety is algebraically degenerate as soon as the irregularity is greater than the dimension). 

Several years later, Siu outlined new ideas for proving Kobayashi's conjecture, by making use of jet differentials and by generalizing some techniques due to Clemens, Ein and Voisin (see \cite{Siu04}). However many details are missing, and it also seems to be hard to derive effective results from Siu's approach.

We would like here to concentrate on a refined and more geometrical version of the bundle of Green and Griffiths, namely the bundle of \emph{invariant jet differentials}, which was first introduced in this context by Demailly in \cite{Demailly95}. This bundle reflects better the geometry of entire curves since it just takes care of the image of  such curves and not of the way they are parametrized: following \cite{Demailly95}, we will denote it $E_{k,m}T^*_X$.
The general philosophy is that global holomorphic sections of $E_{k,m}T^*_X$ vanishing on a fixed ample divisor give rise to global algebraic differential equations that every entire holomorphic curve must satisfy.

It is known by \cite{D-EG00} that every smooth surface in $\mathbb P^3$ of degree greater or equal to $15$ has such differential equations of order two. For the dimension three case, Rousseau \cite{Rousseau06b} observed that one needs to look for order three equations since one has in general the vanishing of symmetric differentials and invariant $2$-jet differentials for smooth hypersurfaces in projective $4$-space. On the other hand \cite{Rousseau06b} shows the existence of global invariant $3$-jet differentials vanishing on an ample divisor on every smooth hypersurface $X$ in $\mathbb P^4$, provided that $\deg X\ge 97$. 

Recently, in \cite{Div08}, we improved the bound for the degree obtained in \cite{Rousseau06b} and found the existence of invariant jet differentials for smooth projective hypersurfaces of dimension at most $8$ (with an explicit effective lower bound for the degree of the hypersurface up to dimension $5$). 

Until our paper \cite{Div08}, the existence was obtained by showing first that the Euler characteristic $\chi(E_{k,m}T^*_X)$ of the bundle of invariant jet differentials is positive for $m$ large enough. Then, with a delicate study of the even cohomology groups of such bundles -- which usually involves the rather difficult investigation of the composition series of $E_{k,m}T^*_X$ -- one could obtain in principle a positive lower bound for $h^0(X,E_{k,m}T^*_X)$ in terms of the Euler characteristic.

Here we generalize the result of \cite{Div08} to arbitrary dimension, thus solving the problem of finding \emph{invariant} jet differentials on complex projective hypersurfaces of high degree. Namely we get the following.

\begin{theorem}\label{existence} 
Let $X\subset\mathbb P^{n+1}$ be a smooth complex projective hypersurface and let $A\to X$ be an ample line bundle. Then there exists a positive integer $\delta_n$ such that
$$
H^0(X,E_{k,m}T^*_X\otimes A^{-1})\ne 0,\quad k\ge n,
$$
provided that $\deg(X)\ge\delta_n$ and $m$ is large enough.
\end{theorem}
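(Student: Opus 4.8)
The plan is to work on Demailly's jet tower and to deduce the existence of sections from the \emph{algebraic holomorphic Morse inequalities}, rather than from a direct (and, for large $n$, intractable) analysis of the even cohomology of $E_{k,m}T^*_X$ via its composition series. First I would reduce the statement to the critical order $k=n$: since an invariant jet differential of order $n$ is a fortiori one of order $k$ for every $k\ge n$, there is a natural injection of bundles $E_{n,m}T^*_X\hookrightarrow E_{k,m}T^*_X$, which remains injective on global sections after twisting by $A^{-1}$; hence it suffices to produce a non-zero section of $E_{n,m}T^*_X\otimes A^{-1}$. Building the Demailly--Semple tower $\pi_j\colon X_j=P(V_{j-1})\to X_{j-1}$, $X_0=X$, with $\rank V_j=n$ and $\dim X_n=n+n(n-1)=n^2$, writing $\pi:=\pi_{0,n}\colon X_n\to X$ for the composite projection, and letting $\mathcal O_{X_n}(b)$ be the weighted tautological line bundle attached to a weight $b=(b_1,\dots,b_n)$, the direct image formula $\pi_*\mathcal O_{X_n}(b)=E_{n,|b|}T^*_X$ turns the problem into showing that $H^0\bigl(X_n,\mathcal O_{X_n}(b)\otimes\pi^*A^{-1}\bigr)\ne0$ for a suitable weight $b$ of large total degree.

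Next I would apply the weak algebraic Morse inequality on the $n^2$-dimensional manifold $X_n$. Using Demailly's nefness criterion for the tautological bundle, I would take $b$ admissible (say $b_1\ge\cdots\ge b_n$ with $b_i\ge 3b_{i+1}$) and split
\[
\mathcal O_{X_n}(b)\otimes\pi^*A^{-1}=F\otimes G^{-1},\qquad F=\mathcal O_{X_n}(b)\otimes\pi^*\mathcal O_X(c),\quad G=\pi^*\mathcal O_X(c'),
\]
with $c,c'$ linear in $|b|$ chosen so that both $F$ and $G$ are nef. The Morse inequality then gives
\[
h^0\bigl(X_n,\mathcal O_{X_n}(mb)\otimes\pi^*A^{-1}\bigr)\ge\frac{m^{n^2}}{(n^2)!}\bigl(F^{n^2}-n^2\,F^{n^2-1}\!\cdot G\bigr)-o(m^{n^2}),
\]
so the whole theorem is reduced to the \emph{positivity of the intersection number} $F^{n^2}-n^2F^{n^2-1}\!\cdot G$ for an appropriate weight.

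To evaluate this number I would integrate along the fibers of the tower: repeated use of the Grothendieck relation and of the Segre classes $s_\bullet(V_{j-1})$ expresses every monomial in the classes $u_j=c_1(\mathcal O_{X_j}(1))$ and $\pi^*h$ (with $h=c_1(\mathcal O_X(1))$) as a Chern number of $X$. Because $X$ is a hypersurface of degree $d$ one has $c(T_X)=(1+h)^{n+2}/(1+dh)|_X$ and $\int_X h^n=d$, so each such intersection number is a polynomial in $d$. A power count shows that $F^{n^2}$ grows like $d^{n+1}$ while $F^{n^2-1}\!\cdot G$ grows only like $d^{n}$ (the extra factor $\pi^*h$ coming from $G$ consumes one unit of the degree available on $X$ without carrying a Chern class, hence lowers the $d$-degree), so the sign of the Morse term for $d$ large is governed by the leading coefficient in $d$ of the top self-intersection $\int_{X_n}c_1(\mathcal O_{X_n}(b))^{n^2}$.

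The heart of the matter --- and the step I expect to be the main obstacle --- is to prove that this leading coefficient is strictly positive for a suitable weight $b$, uniformly in $n$. Collecting the top-degree-in-$d$ contributions amounts, after substituting $c_i(T_X)\mapsto(-1)^i d^i h^i$, to a purely combinatorial sum over partitions of $n$ weighted by Segre-type coefficients of the tower, and one must check that these combine with the correct (positive) sign. In low dimension this was settled by explicit computation; for arbitrary $n$ the difficulty is precisely that the cohomology ring of $X_n$ becomes unwieldy, so one needs a closed-form or structural identity for the leading term --- for instance by reducing it to a single, manifestly positive Chern/Segre number --- rather than a case-by-case evaluation. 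Once this positivity is established, choosing $d\ge\delta_n$ makes the Morse term positive, whence $h^0\bigl(X_n,\mathcal O_{X_n}(mb)\otimes\pi^*A^{-1}\bigr)>0$ for $m$ large, and the reduction of the first paragraph yields the conclusion for all $k\ge n$.
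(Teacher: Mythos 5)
Your framework is the same as the paper's: work on the Demailly--Semple tower, reduce to $k=n$ via the inclusion $E_{n,m}T^*_X\subset E_{k,m}T^*_X$, split the weighted tautological bundle as a difference of nef bundles $F=\mathcal O_{X_n}(\mathbf a)\otimes\pi_{0,n}^*\mathcal O_X(2|\mathbf a|)$ and $G=\pi_{0,n}^*\mathcal O_X(2|\mathbf a|)$, apply the algebraic Morse inequalities of \cite{Trapani95}, and observe that every term containing $G$ contributes only $o(d^{n+1})$, so that everything hinges on the coefficient of $d^{n+1}$ in $\mathcal O_{X_n}(\mathbf a)^{n^2}$. But at exactly the decisive step you stop: you write that the positivity of this leading coefficient is ``the main obstacle'' and that one ``needs a closed-form or structural identity'' for it, without supplying one. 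This is a genuine gap, not a routine verification --- it is precisely the point where the composition-series approach of earlier work became intractable in higher dimension, and nothing in your proposal indicates how to get past it.

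The paper closes this gap with two ideas absent from your proposal. First, the sign never has to be computed: by Proposition \ref{nef} the $d^{n+1}$-coefficient of $\mathcal O_{X_k}(\mathbf a)^{n+k(n-1)}$ equals that of $F^{n+k(n-1)}$, and since $F$ is nef its top self-intersection is non-negative; so it suffices to prove that this coefficient, viewed as a polynomial in $\mathbf a$, is not identically zero, after which Lemma \ref{Zar} (Zariski density of the integer points of the cone of admissible weights) produces an admissible $\mathbf a$ where it is non-zero, hence positive (Corollary \ref{bigness1}). Second --- the heart of the paper --- the ``unwieldy'' combinatorics is tamed by running the Morse inequalities \emph{backwards} from the vanishing theorem of \cite{Div08}: since $H^0(X,E_{k,m}T^*_X)=0$ for $k<n$ (Theorem \ref{vanish}), the $d^{n+1}$-coefficient of $\mathcal O_{X_k}(\mathbf a)^{n+k(n-1)}$ must vanish \emph{identically} in $\mathbf a$ for all $k<n$ (Corollary \ref{bigness2}), hence every monomial $\pi_{1,k}^*u_1^{j_1}\cdots u_k^{j_k}$ with $k<n$ is $o(d^{n+1})$. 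Feeding these vanishings into the Grothendieck relations (\ref{chern3}) and the expression (\ref{c1}) of $c_1^{[k]}$, an induction (Lemma \ref{cruc1}) shows that on $X_n$ the single monomial $\pi_{1,n}^*u_1^n\cdots u_n^n$ has $d^{n+1}$-coefficient equal to that of $(-1)^nc_1(X)^n$, namely $1$ (Lemma \ref{cruc2}); by the multinomial theorem the coefficient of $a_1^n\cdots a_n^n$ in the leading term of $\mathcal O_{X_n}(\mathbf a)^{n^2}$ is then $(n^2)!/(n!)^n\ne 0$, which is all that is needed. So the structural identity you ask for does exist, but it comes from the sharp vanishing theorem in lower jet order, used as an intersection-theoretic input, rather than from any closed-form evaluation of the cohomology ring of $X_n$; without this (or an equivalent device) your argument does not go through.
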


In other words, on every smooth $n$-dimensional complex projective hypersurface of sufficiently high degree, there exist global invariant jet differentials of order $n$ vanishing on an ample divisor, and every entire curve must satisfy the corresponding differential equation.

Unfortunately, the lower bound for the degree of $X$ is effective just theoretically,  and one can compute explicit values just for low dimensions (see \cite{Div08}). Nevertheless,  the result is sharp as far as the order $k$ of jets is concerned since, by a theorem of \cite{Div08}, there are no jet differentials of order $k<n$ on a smooth projective hypersurface of dimension $n$.

We also prove a logarithmic version of the above theorem.

\begin{theorem}\label{logexistence}
Let $D\subset\mathbb P^n$ be a smooth irreducible divisor and let $A\to \mathbb P^n$ be an ample line bundle. Then there exists a positive integer $\delta_n$ such that
$$
H^0(\mathbb P^n,E_{k,m}T^*_{\mathbb P^n}\langle D\rangle\otimes A^{-1})\ne 0,\quad k\ge n,
$$
provided that $\deg(D)\ge\delta_n$ and $m$ is large enough.

Moreover, we have the effective lower bounds for the degree $\delta$ of $D$ as shown in Table \ref{logeffres} (depending on the values of $n$ and $k$).
\end{theorem}

\begin{table}
\caption{Effective lower bound for degree $\delta$.}\label{logeffres}
\centering
\vspace{0,5cm}
\begin{tabular}{*{6}{c}}
\hline 
 & \multicolumn{5}{c}{$\mathbf{k}$} \\ \cline{2-6} 
$\mathbf{n}$ & 1 & 2 & 3 & 4 & 5 \\ \hline
2  &  & 15 & 14 & 14 & 14 \\
3 &  &  & 75 & 67 & 67 \\
4 &  &  &  & 306 & 280\\
5 &  &  &  &  & 1154\\ \hline
\end{tabular}
\end{table} 

In the statement above, the bundle $E_{k,m}T^*_{\mathbb P^n}\langle D\rangle$, is the vector bundle of logarithmic invariant jet differentials, introduced in the general setting of logarithmic varieties by Dethloff and Lu in \cite{D-L01}.

Finally, we would like to stress that our proof is based on the algebraic version of holomorphic Morse inequalities of \cite{Trapani95}, and so we deal directly with the dimension of the space of global sections$\,$: we are able in this way to skip entirely the arduous study of the higher cohomology and of the graded bundle associated to $E_{k,m}T^*_X$.

\subsection*{Acknowledgements}
We are indebted to Prof.\ Stefano Trapani, who suggested to us the main ideas in Lemma \ref{cruc1} and Lemma \ref{cruc2}. 

We would like to thank also Prof. Jean-Pierre Demailly for a careful reading of the present manuscript, and for many useful suggestions and comments.

Finally, thanks to Gianluca Pacienza who suggested us to try to extend our results to the logarithmic case.

\section{Notations and preliminary material}

Let $X$ be a compact complex manifold and $V\subset T_X$ a holomorphic (non necessarily integrable) subbundle of the tangent bundle of $X$.

\subsection{Invariant jet differentials}

The bundle $J_kV\to X$ is the bundle of $k$-jets of holomorphic curves $f\colon(\mathbb C,0)\to X$ which are tangent to $V$, i.e., such that $f'(t)\in V_{f(t)}$ for all $t$ in a neighbourhood of $0$, together with the projection map $f\mapsto f(0)$ onto $X$.

Let $\mathbb G_k$ be the group of germs of $k$-jets of biholomorphisms of $(\mathbb C,0)$, that is, the group of germs of biholomorphic maps
$$
t\mapsto\varphi(t)=a_1\, t+ a_2\, t^2+\cdots+a_k\,t^k,\quad a_1\in\mathbb C^*,a_j\in\mathbb C,j\ge 2,
$$
in which the composition law is taken modulo terms $t^j$ of degree $j>k$. Then $\mathbb G_k$ admits a natural fiberwise right action on $J_kV$ consisting of reparametrizing $k$-jets of curves by a biholomorphic change of parameter. 

Next, we define the bundle of Demailly-Semple jet differentials (or invariant jet differentials).

\begin{definition}[\cite{Demailly95}]
The vector bundle of \emph{invariant jet differentials of order $k$ and degree $m$} is the bundle $E_{k,m}V^*\to X$ of polynomial differential operators $Q(f',f'',\dots,f^{(k)})$ over the fibers of $J_kV$, which are invariant under arbitrary changes of parametrization, \emph{i.e.} for every $\varphi\in\mathbb G_k$
$$
Q((f\circ\varphi)',(f\circ\varphi)'',\dots,(f\circ\varphi)^{(k)})=\varphi'(0)^m\,Q(f',f'',\dots,f^{(k)}).
$$
\end{definition}

\subsection{Projectivized jet bundles}

Here is the construction of the tower of projectivized bundles which provides a (relative) smooth compactification of $J^{\text{reg}}_kV/\mathbb G_k$, where $J^{\text{reg}}_kV$ is the bundle of regular $k$-jets tangent to $V$, that is $k$-jets such that $f'(0)\ne 0$.

Let $\dim X=n$ and $\rank V=r$. With $(X,V)$ we associate another \lq\lq directed\rq\rq{} manifold $(\widetilde X,\widetilde V)$ where $\widetilde X=P(V)$ is the projectivized bundle of lines of $V$, $\pi\colon\widetilde X\to X$ is the natural projection and $\widetilde V$ is the subbundle of $T_{\widetilde X}$ defined fiberwise as
$$
\widetilde V_{(x_0,[v_0])}\overset{\text{def}}=\{\xi\in T_{\widetilde X,(x_0,[v_0])}\mid\pi_*\xi\in\mathbb C.v_0\},
$$
$x_0\in X$ and $v_0\in T_{X,x_0}\setminus\{0\}$. 
We also have a \lq\lq lifting\rq\rq{} operator which assigns to a germ of holomorphic curve $f\colon(\mathbb C,0)\to X$ tangent to $V$ a germ of holomorphic curve $\widetilde f\colon(\mathbb C,0)\to\widetilde X$ tangent to $\widetilde V$ in such a way that $\widetilde f(t)=(f(t),[f'(t)])$.

To construct the projectivized $k$-jet bundle we simply set inductively $(X_0,V_0)=(X,V)$ and $(X_k,V_k)=(\widetilde X_{k-1},\widetilde V_{k-1})$. Of course, we have for each $k>0$ a tautological line bundle $\mathcal O_{X_k}(-1)\to X_k$ and a natural projection $\pi_k\colon X_k\to X_{k-1}$. We shall call $\pi_{j,k}$ the composition of the projections $\pi_{j+1}\circ\cdots\circ\pi_{k}$, so that the total projection is given by $\pi_{0,k}\colon X_k\to X$.
For each $k>0$, we have short exact sequences
\begin{equation}\label{ses1}
0\to T_{X_k/X_{k-1}}\to V_k\to\mathcal O_{X_k}(-1)\to 0,
\end{equation}
\begin{equation}\label{ses2}
0\to\mathcal O_{X_k}\to\pi_k^*V_{k-1}\otimes\mathcal O_{X_k}(1)\to T_{X_k/X_{k-1}}\to 0,
\end{equation}
where $T_{X_k/X_{k-1}}=\ker(\pi_k)_*$ is the vertical tangent bundle relative to $\pi_k$ and $\rank V_k=r$, $\dim X_k=n+k(r-1)$. Here, we also have an inductively defined $k$-lifting for germs of holomorphic curves such that $f_{[k]}\colon(\mathbb C,0)\to X_k$ is obtained as $f_{[k]}=\widetilde f_{[k-1]}$.

The following theorem is the link between these projectivized bundles and jet differentials.

\begin{theorem}[\cite{Demailly95}]\label{di}
Suppose that $\rank V\ge 2$. The quotient space $J_k^{\text{\rm reg}}V/\mathbb G_k$ has the structure of a locally trivial bundle over $X$, and there is a holomorphic embedding $J_k^{\text{\rm reg}}V/\mathbb G_k\hookrightarrow X_k$ over $X$, which identifies $J_k^{\text{\rm reg}}V/\mathbb G_k$ with $X_k^{\text{\rm reg}}$, that is the set of point in $X_k$ on the form $f_{[k]}(0)$ for some non singular $k$-jet $f$. In other words $X_k$ is a relative compactification of $J_k^{\text{\rm reg}}V/\mathbb G_k$ over $X$.

Moreover, we have the direct image formula
$$
(\pi_{0,k})_*\mathcal O_{X_k}(m)=\mathcal O(E_{k,m}V^*).
$$
\end{theorem}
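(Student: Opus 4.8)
The plan is to build everything inductively from the single step $(X,V)\rightsquigarrow(\widetilde X,\widetilde V)$ and then to read off both the geometric identification and the direct image formula from the behaviour of the tautological bundle $\mathcal O_{X_k}(-1)$ along lifted jets. First I would check that the assignment $j^kf\mapsto f_{[k]}(0)$ is $\mathbb G_k$-invariant. The computation is really one line at each stage: for $g=f\circ\varphi$ one has $g'(t)=\varphi'(t)\,f'(\varphi(t))$, hence $[g'(t)]=[f'(\varphi(t))]$ and $\widetilde g=\widetilde f\circ\varphi$; iterating gives $g_{[k]}=f_{[k]}\circ\varphi$, so $g_{[k]}(0)=f_{[k]}(0)$ because $\varphi(0)=0$. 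Thus the map $\Psi_k\colon J_k^{\text{reg}}V\to X_k$ descends to the quotient. A dimension count is the first consistency check: $\dim J_k^{\text{reg}}V=n+kr$ and $\dim\mathbb G_k=k$ give fibres of dimension $k(r-1)$ for the quotient, matching $\dim X_k=n+k(r-1)$.

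Next I would prove that the induced map $\overline\Psi_k\colon J_k^{\text{reg}}V/\mathbb G_k\to X_k$ is a locally closed embedding with image $X_k^{\text{reg}}$. The core is injectivity: given $f_{[k]}(0)=g_{[k]}(0)$, one must reconstruct a $\varphi\in\mathbb G_k$ with $g\equiv f\circ\varphi\pmod{t^{k+1}}$. I would argue inductively in local coordinates adapted so that $f'(0)$ points along $e_1$; the affine fibre coordinates of $X_1=P(V)$ then record the projectivized direction $(f_2'/f_1',\dots,f_r'/f_1')$, and the vertical coordinates of the next stage record exactly the part of $f''$ that survives projectivization. At each level passing to $P(V_{j-1})$ quotients out precisely the scaling freedom contributed by the coefficient $a_j$ of $\varphi$, so the point of $X_k$ determines the jet up to $\mathbb G_k$ and no more. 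Local triviality of $J_k^{\text{reg}}V/\mathbb G_k$ over $X$ follows because this local model is independent of the base point, the construction being fibrewise.

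I would then identify $X_k^{\text{reg}}$ as a Zariski-dense open subset of $X_k$ whose complement $X_k^{\text{sing}}$ is the locus of jets that become \emph{vertical} at some stage, i.e.\ where the lifted tangent $f_{[j]}'(0)$ lies in the subbundle $T_{X_j/X_{j-1}}$ of \eqref{ses1}. Since $X_k$ is smooth and proper over $X$, being an iterated $\mathbb P^{r-1}$-bundle, this exhibits $X_k$ as the desired relative compactification of $J_k^{\text{reg}}V/\mathbb G_k$. For the direct image formula I would use the transformation law of $\mathcal O_{X_k}(-1)$: evaluating a local section $\sigma$ of $\mathcal O_{X_k}(m)$ along $f_{[k]}$ yields $Q(f',\dots,f^{(k)}):=\sigma(f_{[k]}(0))$, read in the tautological trivialization, and the chain rule shows $Q$ is a polynomial that is weighted-homogeneous of degree $m$; indeed the scaling $t\mapsto a_1t$ multiplies $f^{(j)}(0)$ by $a_1^j$ and the tautological line by $a_1$, producing the factor $a_1^m=\varphi'(0)^m$. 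The full $\mathbb G_k$-equivariance established above forces $Q$ to satisfy the invariance condition defining $E_{k,m}V^*$, and conversely every such $Q$ defines a function on $X_k^{\text{reg}}=J_k^{\text{reg}}V/\mathbb G_k$, which one extends holomorphically across $X_k^{\text{sing}}$.

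I expect the two genuine obstacles to be the following. First, the injectivity step, where one must verify that the iterated projectivization loses \emph{precisely} the $\mathbb G_k$-orbit information: this is where the adapted coordinates and the inductive bookkeeping of how each $a_j$ enters the $j$-th lift have to be organized cleanly. Second, the extension of invariant operators across the singular locus in the direct image formula, where one must control the order of poles of $\sigma$ along $X_k^{\text{sing}}$; this is the point at which properness of $\pi_{0,k}$, together with a codimension estimate on $X_k^{\text{sing}}$ guaranteeing that the extension is unobstructed, does the essential work.
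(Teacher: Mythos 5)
A preliminary remark: the paper does not prove Theorem \ref{di} at all --- it quotes it from \cite{Demailly95} --- so the only possible comparison is with Demailly's original argument. Your outline does parallel that argument in its geometric half: the functoriality $(f\circ\varphi)_{[k]}=f_{[k]}\circ\varphi$ of the lifting, the dimension count, and the normalization in adapted coordinates showing that a point of $X_k^{\text{reg}}$ determines the jet exactly up to $\mathbb G_k$ are all as in \cite{Demailly95}, and that part of your sketch is sound. The trouble lies precisely in the two steps you yourself single out as ``the genuine obstacles'': both are resolved incorrectly. First, polynomiality of $Q$: the chain rule gives only the invariance identity, hence weighted homogeneity of degree $m$ under $t\mapsto\lambda t$; it does not give polynomiality. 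For instance $(f_2'/f_1')^N\,(f_1')^m$ is invariant, weighted homogeneous of degree $m$, and holomorphic on the open set of regular $1$-jets, yet it is not a polynomial (it corresponds to a section of $\mathcal O_{X_1}(m)$ with poles along a divisor of $X_1$). What forces $Q$ to be a polynomial in Demailly's proof is an extension step: $Q$, a priori holomorphic only on $\{f'(0)\ne 0\}$, extends holomorphically across $\{f'(0)=0\}$ because that locus has codimension $r=\rank V\ge 2$ in the space of $k$-jets (second Riemann extension theorem), and only then does homogeneity with positive weights force $Q$ to be a weighted homogeneous polynomial of degree $m$. This is the one place where the hypothesis $\rank V\ge 2$ enters; your proposal never invokes that hypothesis anywhere, which is a sign the step is missing.

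Second, your extension of the section attached to an invariant polynomial $Q$ across $X_k^{\text{sing}}$ relies on ``a codimension estimate on $X_k^{\text{sing}}$ guaranteeing that the extension is unobstructed''. That premise is false: for $k\ge 2$ the complement $X_k\setminus X_k^{\text{reg}}$ is a union of \emph{divisors}, namely $\bigcup_{2\le j\le k}\pi_{j,k}^{-1}(D_j)$ with $D_j=P(T_{X_{j-1}/X_{j-2}})$, hence it has codimension one. This is visible in the present paper itself: $D_k$ is the zero divisor of the canonical morphism $\mathcal O_{X_k}(-1)\to\pi_k^*\mathcal O_{X_{k-1}}(-1)$ giving formula (\ref{mor}). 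So no Hartogs-type argument applies across $X_k^{\text{sing}}$, and properness of $\pi_{0,k}$ does not produce the extension either; one needs an actual boundedness (Riemann removable singularity) argument, or Demailly's route, which exhibits every point of $X_k$ --- including the singular ones --- as $f_{[k]}(0)$ for a suitable non-constant germ $f$ (necessarily with $f'(0)=0$ at singular points) and uses the polynomial $Q$ evaluated along such germs to pin down and control the value of the section there. As it stands, your argument proves the direct image formula only over $X_k^{\text{reg}}$, which is strictly weaker than the statement.
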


Next, here is the link between the theory of hyperbolicity and invariant jet differentials.

\begin{theorem}[\cite{G-G79},\cite{Demailly95}]
Assume that there exist integers $k,m>0$ and an ample line bundle $A\to X$ such that 
$$
H^0(X_k,\mathcal O_{X_k}(m)\otimes\pi_{0,k}^*A^{-1})\simeq H^0(X,E_{k,m}V^*\otimes A^{-1})
$$
has non zero sections $\sigma_1,\dots,\sigma_N$ and let $Z\subset X_k$ be the base locus of these sections. Then every entire holomorphic curve $f\colon\mathbb C\to X$ tangent to $V$ is such that $f_{[k]}(\mathbb C)\subset Z$. In other words, for every global $\mathbb G_k$-invariant differential equation $P$ vanishing on an ample divisor, every entire holomorphic curve $f$ must satisfy the algebraic differential equation $P(f)=0$.
\end{theorem}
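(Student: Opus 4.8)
The plan is to prove this fundamental vanishing statement by means of the Ahlfors--Schwarz lemma. First I would reinterpret the data: by Theorem \ref{di} (the direct image formula), the nonzero sections $\sigma_1,\dots,\sigma_N$ of $\mathcal O_{X_k}(m)\otimes\pi_{0,k}^*A^{-1}$ correspond to nonzero global invariant jet differentials $P_1,\dots,P_N\in H^0(X,E_{k,m}V^*\otimes A^{-1})$. Given an entire holomorphic curve $f\colon\mathbb C\to X$ tangent to $V$, the goal is to show $P_j(f',\dots,f^{(k)})\equiv 0$ for every $j$. Since the common zero set of the $\sigma_j$ in $X_k$ is exactly $Z$, and since the simultaneous vanishing $P_j(f)\equiv 0$ for all $j$ is equivalent to $f_{[k]}(\mathbb C)\subset Z$, this will yield both assertions at once.

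Because $A$ is ample, fix a smooth Hermitian metric $h$ on $A$ whose Chern curvature form $\omega=\frac{i}{2\pi}\Theta_h(A)$ is positive (a K\"ahler form on $X$); the dual metric on $A^{-1}$ then has Chern curvature $-\omega$. Each $P_j$, being a polynomial differential operator of weighted degree $m$ that is invariant under reparametrization, evaluates along the $k$-jet of $f$ to a well-defined $A^{-1}$-valued $m$-differential $P_j(f)(t)\,(dt)^m$ on $\mathbb C$; equivalently, using the tautological structure of the tower, $P_j(f)$ is obtained by pulling back $\sigma_j$ along the lift $g=f_{[k]}\colon\mathbb C\to X_k$ and contracting with the canonical derivative of $g$, which is a section of $g^*\mathcal O_{X_k}(-1)\otimes T_{\mathbb C}^*$. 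Arguing by contradiction, suppose some $P_j(f)\not\equiv 0$ and set
$$
\gamma_0(t)=\bigl\|P_j(f)(t)\bigr\|^{2/m}.
$$
Since $P_j(f)$ is a holomorphic section of $f^*A^{-1}$, the Poincar\'e--Lelong formula gives
$$
\frac{i}{2\pi}\,\partial\bar\partial\log\gamma_0=\frac1m\Bigl([\operatorname{div}P_j(f)]+f^*\omega\Bigr)\ge\frac1m\,f^*\omega .
$$

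The next step, and the crux of the whole argument, is to upgrade this to the differential inequality required by Ahlfors--Schwarz, namely $\frac{i}{2\pi}\partial\bar\partial\log\gamma_0\ge\alpha\,\gamma_0\,i\,dt\wedge d\bar t$ for some constant $\alpha>0$. For order $k=1$ this is immediate: by compactness of $X$, $\gamma_0$ is comparable to $|f'|^2$, while $f^*\omega$ is comparable to $|f'|^2\,i\,dt\wedge d\bar t$, so the two sides match up to a uniform constant. For general $k$ the naive estimate fails, since the higher derivatives $f'',\dots,f^{(k)}$ entering $P_j$ are not dominated by $|f'|$, and one is forced to perform the comparison upstairs, on the \emph{compact} manifold $X_k$. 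There one bounds $\gamma_0$, built from $\sigma_j$ and the derivative of the lifted curve $g$, below by the pullback under $g$ of a suitable positive combination of $\pi_{0,k}^*\omega$ with the curvature contributions distributed along the jet tower, the short exact sequences \eqref{ses1}--\eqref{ses2} serving to control the relative tangent directions. This comparison, which is exactly where the vanishing of the sections along the ample divisor associated with $A^{-1}$ is used, is the main obstacle and the only genuinely delicate point; everything else is formal.

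Granting the curvature inequality, I would conclude by applying the Ahlfors--Schwarz lemma on each disc $\Delta_R=\{|t|<R\}$: it forces $\gamma_0(t)\le\frac{2}{\alpha}\,R^{-2}\bigl(1-|t|^2/R^2\bigr)^{-2}$, and letting $R\to\infty$ yields $\gamma_0\equiv 0$, that is $P_j(f)\equiv 0$, contradicting the hypothesis. Hence every $P_j(f)\equiv 0$, the lift $f_{[k]}$ maps $\mathbb C$ into the base locus $Z$ of the sections, and every entire curve $f$ tangent to $V$ must satisfy the algebraic differential equation $P(f)=0$ attached to each global $\mathbb G_k$-invariant jet differential $P$ vanishing on the ample divisor, as claimed.
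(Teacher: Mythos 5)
You are proving a statement that the paper itself only quotes from \cite{G-G79} and \cite{Demailly95} (no proof is given in the paper), so the relevant comparison is with Demailly's argument, which is indeed an Ahlfors--Schwarz argument; your outline reproduces its shell (reduction via Theorem \ref{di}, Poincar\'e--Lelong, Ahlfors--Schwarz on $\Delta_R$ with $R\to\infty$) but not its content. The step you label the ``only genuinely delicate point'' and then grant yourself --- upgrading $\frac{i}{2\pi}\partial\bar\partial\log\gamma_0\ge\frac1m f^*\omega$ to $\frac{i}{2\pi}\partial\bar\partial\log\gamma_0\ge\alpha\,\gamma_0$ by dominating $\gamma_0$ by $f^*\omega$ --- is not a delicate verification: in the form you propose it is false for every $k\ge 2$. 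After absorbing the $A^{-1}$-factor by compactness (exactly as in your $k=1$ case), the bound you need is $|Q(f',\dots,f^{(k)})(t)|^{2/m}\le C\,\|f'(t)\|^2_\omega$ for an invariant operator $Q$ of weighted degree $m$. Test this on the Wronskian $Q(f',f'')=f'\wedge f''$ (invariant, $m=3$) and a curve with a cusp, $f(t)=x_0+(t^2,t^3,0,\dots,0)$ in local coordinates: $Q(f)(t)=6t^2$ while $\|f'(t)\|^3\sim|t|^3$, so $\gamma_0/\|f'\|^2\sim|t|^{-2/3}\to\infty$. Reparametrization invariance controls $P(f)$ only at points where $f$ is an immersion up to reparametrization; an entire curve may well have singular points, and these are precisely the points where $f_{[k]}$ meets the divisors $D_j$ of (\ref{mor}), on which the composed morphism $\mathcal O_{X_k}(-1)\to\pi_{0,k}^*T_X$ vanishes, so that $\|f'\|$ is anomalously small compared with $\|f_{[k-1]}'\|$. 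The twist by $A^{-1}$ cannot repair this, since it imposes conditions over points of $X$ while the failure occurs in the fiber directions of $X_k\to X$; nor do the exact sequences (\ref{ses1})--(\ref{ses2}) by themselves produce any positivity in those directions.

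What the proof in \cite{Demailly95} actually does is apply Ahlfors--Schwarz to $\gamma_0(t)=\|f_{[k-1]}'(t)\|^2_{h_k}$, where $f_{[k-1]}'$ is viewed as a section of $f_{[k]}^*\mathcal O_{X_k}(-1)$ and $h_k$ is a \emph{singular metric on $\mathcal O_{X_k}(-1)$ whose curvature is bounded above by $-\epsilon\,\omega_k$ along all of $V_k$}, for a smooth positive $\omega_k$ on the compact manifold $X_k$. With such a metric the loop closes for elementary reasons: Poincar\'e--Lelong gives $\frac i{2\pi}\partial\bar\partial\log\gamma_0\ge\epsilon\|f_{[k]}'\|^2_{\omega_k}$, and $\gamma_0=\|(\pi_k)_*f_{[k]}'\|^2_{h_k}\le C\|f_{[k]}'\|^2_{\omega_k}$ because $h_k$ is locally bounded above and $(\pi_k)_*$ is a smooth bundle morphism --- that is, the correct comparison is with the derivative of the full lift measured upstairs, never with $\|f'\|$ downstairs. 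The real work, entirely absent from your sketch, is the construction of $h_k$: the metric defined by $\sigma_j$ alone (which is your $\gamma_0$) has curvature only $\le-\frac1m\pi_{0,k}^*\omega$, identically degenerate along the vertical directions of $V_k$; one must mix a high power of it with smooth metrics on a weighted bundle $\mathcal O_{X_k}(\mathbf a)$ that is relatively positive over $X$ (weights as in (\ref{relnef})), the high power of the $\sigma_j$-part absorbing the bounded horizontal errors of the smooth factors. The degeneration set of the mixed metric is then $Z$ together with the divisors $\pi_{j+1,k}^{-1}(D_{j+1})$, and this excess is removed by noting that the lift of a non-constant curve is never contained in any $D_j$ (that would force $f_{[j-2]}'\equiv0$, hence $f$ constant). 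Alternatively one can avoid metrics altogether, as Green--Griffiths do, by integrating your inequality $\frac i{2\pi}\partial\bar\partial\log\gamma_0\ge\frac1m f^*\omega$ through Jensen's formula and estimating the result with the logarithmic derivative lemma of Nevanlinna theory. Either route supplies exactly the idea your proposal is missing.
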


\subsection{Cohomology ring of $X_k$}

Denote by $c_\bullet(E)$ the total Chern class of a vector bundle $E$. The short exact sequences (\ref{ses1}) and (\ref{ses2}) give, for each $k>0$, the following formulae:
$$
c_\bullet(V_k)=c_\bullet(T_{X_k/X_{k-1}}) c_\bullet(\mathcal O_{X_k}(-1))
$$
and
$$
c_\bullet(\pi_k^*V_{k-1}\otimes\mathcal O_{X_k}(1))=c_\bullet(T_{X_k/X_{k-1}}),
$$
so that
\begin{equation}\label{chern1}
c_\bullet(V_k)=c_\bullet(\mathcal O_{X_k}(-1))c_\bullet(\pi_k^*V_{k-1}\otimes\mathcal O_{X_k}(1)).
\end{equation}
Let us call $u_j=c_1(\mathcal O_{X_j}(1))$ and $c_l^{[j]}=c_l(V_j)$. With these notations, (\ref{chern1}) becomes
\begin{equation}\label{chern2}
c_l^{[k]}=\sum_{s=0}^l\left[\binom{n-s}{l-s}-\binom{n-s}{l-s-1}\right]u_k^{l-s}\cdot\pi_k^*c_s^{[k-1]},\quad 1\le l\le r.
\end{equation}
Since $X_j$ is the projectivized bundle of line of $V_{j-1}$, we also have the polynomial relations
\begin{equation}\label{chern3}
u_j^r+\pi_j^*c_1^{[j-1]}\cdot u_j^{r-1}+\cdots+\pi_j^*c_{r-1}^{[j-1]}\cdot u_j+\pi_j^*c_{r}^{[j-1]}=0,\quad 1\le j\le k.
\end{equation}
After all, the cohomology ring of $X_k$ is defined in terms of generators and relations as the polynomial algebra $H^\bullet(X)[u_1,\dots,u_k]$ with the relations (\ref{chern3}) in which, of course, utilizing inductively (\ref{chern2}), we have that $c_l^{[j]}$ is a polynomial with integral coefficients in the variables $u_1,\dots,u_j,c_1(V),\dots,c_l(V)$.

In particular, for the first Chern class of $V_k$, we obtain the very simple expression
\begin{equation}\label{c1}
c_1^{[k]}=\pi_{0,k}^*c_1(V)+(r-1)\sum_{s=1}^k\pi_{s,k}^*u_s.
\end{equation}

\subsection{Algebraic holomorphic Morse inequalities}

Let $L\to X$ be a holomorphic line bundle over a compact K\"ahler manifold of dimension $n$ and $E\to X$ a holomorphic vector bundle of rank $r$. Suppose that $L$ can be written as the difference of two nef line bundles, say $L=F\otimes G^{-1}$, with $F,G\to X$ numerically effective. Then we have the following asymptotic estimate for the partial alternating sum of the dimension of cohomology groups of powers of $L$ with values in $E$.

\begin{theorem}[\cite{Demailly00}]
With the previous notation, we have $($strong algebraic holomorphic Morse inequalities$):$
$$
\sum_{ j=0}^q(-1)^{q-j}h^j(X,L^{\otimes m}\otimes E)\le r\frac{m^n}{n!}\sum_{j=0}^q(-1)^{q-j}\binom nj F^{n-j}\cdot G^j+o(m^n).
$$
In particular {\rm \cite{Trapani95}}, $L^{\otimes m}\otimes E$ has a global section for $m$ large as soon as $F^n-n\,F^{n-1}\cdot G>0$.
\end{theorem}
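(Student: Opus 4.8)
The plan is to read the displayed asymptotic estimate as the \emph{algebraic} shadow of Demailly's \emph{analytic} holomorphic Morse inequalities, taken here as the deep input, and then to obtain the existence criterion as the special case $q=1$. Fix a K\"ahler form $\omega$ on $X$. The analytic strong Morse inequalities assert that, for any smooth Hermitian metric on $L$ with Chern curvature form $\Theta$,
$$\sum_{j=0}^q(-1)^{q-j}h^j(X,L^{\otimes m}\otimes E)\le \rank(E)\,\frac{m^n}{n!}\int_{X(\le q)}(-1)^q\Theta^n+o(m^n),$$
where $X(\le q)$ is the open set of points at which $\Theta$, regarded as a Hermitian form, has at most $q$ negative eigenvalues (the rank of $E$ appears because the $E$-twist only affects lower order terms in $m$). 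Everything then reduces to bounding the Morse integral on the right by the intersection numbers $\sum_{j=0}^q(-1)^{q-j}\binom nj F^{n-j}\cdot G^j$.

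Here is where the hypothesis $L=F\otimes G^{-1}$ with $F,G$ nef enters. For every $\varepsilon>0$ I would choose smooth metrics on $F$ and $G$ whose curvatures satisfy $\theta_F\ge-\varepsilon\omega$ and $\theta_G\ge-\varepsilon\omega$; replacing them by $\theta_F+\varepsilon\omega$ and $\theta_G+\varepsilon\omega$ leaves the difference $\Theta=\theta_F-\theta_G$ unchanged, so I may assume $\alpha:=\theta_F\ge0$ and $\beta:=\theta_G\ge0$ at the price of errors that are $O(\varepsilon)$. The crux is then a pointwise linear algebra inequality. Diagonalizing simultaneously at a point, write $\beta=i\sum dz_j\wedge d\bar z_j$ and $\alpha=i\sum a_j\,dz_j\wedge d\bar z_j$ with $a_j\ge0$, so that the eigenvalues of $\Theta$ are $a_j-1$ and the mixed masses become elementary symmetric functions, $\binom nj\alpha^{n-j}\wedge\beta^j=n!\,e_{n-j}(a)\,dV$. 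A bookkeeping of these symmetric functions shows that the combination $P_q:=\sum_{j=0}^q(-1)^{q-j}\binom nj\alpha^{n-j}\wedge\beta^j$ dominates $(-1)^q\Theta^n$ at every point of $X(\le q)$ and is a nonnegative $(n,n)$-form at every point of the complement $X\setminus X(\le q)$. Hence
$$\int_{X(\le q)}(-1)^q\Theta^n\le\int_{X(\le q)}P_q\le\int_X P_q=\sum_{j=0}^q(-1)^{q-j}\binom nj\int_X\alpha^{n-j}\wedge\beta^j,$$
and since $F,G$ are nef the masses $\int_X\alpha^{n-j}\wedge\beta^j$ converge to the intersection numbers $F^{n-j}\cdot G^j$ as $\varepsilon\to0$, which yields the strong algebraic inequality.

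For the existence statement I would specialize to $q=1$. The inequality then reads $h^1-h^0\le \rank(E)\frac{m^n}{n!}(nF^{n-1}\cdot G-F^n)+o(m^n)$, that is $h^0-h^1\ge \rank(E)\frac{m^n}{n!}(F^n-nF^{n-1}\cdot G)-o(m^n)$. Since $h^1\ge0$, we get $h^0(X,L^{\otimes m}\otimes E)\ge \rank(E)\frac{m^n}{n!}(F^n-nF^{n-1}\cdot G)-o(m^n)$, and if $F^n-nF^{n-1}\cdot G>0$ the right-hand side is strictly positive for $m$ large, forcing a nonzero global section.

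The main obstacle is the pointwise inequality of the second step: one must verify, by a careful analysis of the alternating sums of elementary symmetric polynomials in the eigenvalues $a_j$ according to how many of them exceed $1$, that $P_q$ controls $(-1)^q\Theta^n$ on the locus of at most $q$ negative eigenvalues while remaining nonnegative on its complement. This is exactly the numerical refinement of Demailly's inequalities due to Trapani; by contrast the analytic Morse package is cited wholesale and the passage from curvature masses to intersection numbers of nef classes is routine.
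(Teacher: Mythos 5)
A preliminary remark: the paper offers no proof of this statement at all --- it is imported as a black box, the inequality from \cite{Demailly00} and the $q=1$ existence criterion from \cite{Trapani95} --- so the only meaningful benchmark is Demailly's own derivation in the cited source, and your sketch is essentially that derivation: analytic Morse inequalities as the deep input, the $\varepsilon$-perturbation of nef metrics (with $\theta_F+\varepsilon\omega$, $\theta_G+\varepsilon\omega$ leaving $\Theta$ unchanged), reduction to a pointwise inequality between $(-1)^q\Theta^n$ and the alternating combination $P_q$, and passage from the masses $(F+\varepsilon\omega)^{n-j}\cdot(G+\varepsilon\omega)^j$ to the intersection numbers $F^{n-j}\cdot G^j$ as $\varepsilon\to 0$. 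Your $q=1$ specialization, $h^0\ge r\frac{m^n}{n!}\bigl(F^n-nF^{n-1}\cdot G\bigr)-o(m^n)$, is correct and complete. Two harmless imprecisions: in the analytic statement $X(\le q)$ is the union of the \emph{nondegenerate} strata $X(j)$, $j\le q$ (irrelevant here, since $\Theta^n=0$ at degenerate points); and to normalize $\beta$ to the identity in the simultaneous diagonalization you need $\beta>0$, so perturb by $2\varepsilon\omega$, say, rather than $\varepsilon\omega$.

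The one genuine weakness is that the entire mathematical content beyond quoting the analytic package sits in the pointwise inequality, and you assert it (\lq\lq a bookkeeping of these symmetric functions shows\rq\rq{}) rather than prove it; as written, this is a placeholder at the crux. The claims are true and provable in a few lines, so the gap is one of incompleteness, not error. Concretely: using $\prod_k(a_k-1)=\sum_{i=0}^n(-1)^{n-i}e_i(a)$, your claim on $X(\le q)$ is equivalent to $S_p(a):=\sum_{m=0}^p(-1)^{p-m}e_m(a)\ge 0$ with $p=n-q-1$ whenever at least $p+1$ of the $a_k$ are $\ge 1$; this follows by induction on $p$ from the identity $S_p(a)=e_p(a')+(a_n-1)S_{p-1}(a')$, where $a'=(a_1,\dots,a_{n-1})$ and $a_n$ is chosen $\ge 1$. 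Similarly, the nonnegativity of $P_q$ off $X(\le q)$ follows by induction on $q$ from $T_q(a)=(1-a_n)T'_{q-1}(a')+a_ne_{n-1-q}(a')$, where $T_q(a)=\sum_{j=0}^q(-1)^{q-j}e_{n-j}(a)$ and $a_n$ is chosen $<1$. Finally, your attribution is off: this pointwise lemma is part of Demailly's proof in \cite{Demailly00}, not a refinement due to Trapani; what \cite{Trapani95} contributes is precisely the $q=1$ existence criterion, which, as your last paragraph correctly shows, is an immediate corollary of the $q=1$ inequality.
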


\subsection{A vanishing theorem} Let $X\subset\mathbb P^N$ be a smooth complete intersection of dimension $\dim X=n$. In \cite{Div08}, we proved the following vanishing theorem for the space of global sections of invariant jet differentials:

\begin{theorem}[\cite{Div08}]\label{vanish} 
Let $X\subset\mathbb P^{N}$ be a smooth complete intersection of dimension $\dim X=n$. Then 
$$
H^0(X,E_{k,m}T^*_X)=0
$$
for all $m\ge 1$ and $1\le k< n/(N-n)$. In particular, if $X$ is a smooth projective hypersurface, then
$$
H^0(X,E_{k,m}T^*_X)=0
$$
for all $m\ge 1$ and $1\le k\le n-1$.
\end{theorem}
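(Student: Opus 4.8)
The plan is to translate the vanishing of global jet differentials into a vanishing statement for Schur powers of $T^*_X$, and then to prove the latter using the conormal sequence of the complete intersection together with Bott-type vanishing on $\mathbb P^N$. Throughout I take $V=T_X$, so $\rank V=n$.

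First I would forget the jet tower $X_k$ (although Theorem \ref{di} identifies $H^0(X,E_{k,m}T^*_X)$ with $H^0(X_k,\mathcal O_{X_k}(m))$) and work instead with Demailly's filtration of $E_{k,m}T^*_X$ as a bundle on $X$. Its associated graded sits inside the Green--Griffiths graded pieces $S^{\ell_1}T^*_X\otimes\cdots\otimes S^{\ell_k}T^*_X$, with $\ell_1+2\ell_2+\cdots+k\ell_k=m$. Hence a nonzero global section of $E_{k,m}T^*_X$ would, passing to the smallest filtration step that contains it, produce a nonzero global section of one such tensor product. Decomposing each factor and applying Pieri's rule repeatedly, every such tensor product splits into Schur powers $\Gamma^\lambda T^*_X$ whose partition $\lambda$ has at most $k$ parts, since it is obtained by successively tensoring the $k$ single-row Schur functors $S^{\ell_j}=\Gamma^{(\ell_j)}$. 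It therefore suffices to prove that $H^0(X,\Gamma^\lambda T^*_X)=0$ for every partition $\lambda$ with $\ell(\lambda)\le k$.

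For this key vanishing I would use that $X$ is a complete intersection. Writing $c=N-n$ and letting $X$ be cut out by hypersurfaces of degrees $d_1,\dots,d_c$, the conormal bundle is $N^*_{X/\mathbb P^N}=\bigoplus_{i=1}^c\mathcal O_X(-d_i)$ and the conormal sequence $0\to N^*_{X/\mathbb P^N}\to T^*_{\mathbb P^N}|_X\to T^*_X\to 0$ exhibits $T^*_X$ as a quotient of $T^*_{\mathbb P^N}|_X$ by a very negative subbundle. Applying $\Gamma^\lambda$ yields a surjection $\Gamma^\lambda(T^*_{\mathbb P^N}|_X)\twoheadrightarrow\Gamma^\lambda T^*_X$ whose kernel carries a functorial filtration with graded pieces $\Gamma^\mu(T^*_X)\otimes\Gamma^\nu(N^*_{X/\mathbb P^N})$, $\nu\ne 0$, i.e.\ copies of lower Schur powers twisted by strictly negative line bundles $\mathcal O_X(-\sum_i\nu_i d_i)$. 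The long exact sequence then reduces $H^0(X,\Gamma^\lambda T^*_X)$, by induction on the codimension $c$, to the cohomology of Schur powers of $T^*_{\mathbb P^N}$ restricted to $X$, which one resolves in turn by the Koszul complex of the $c$ defining equations and evaluates on $\mathbb P^N$ by Bott's formula.

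The step I expect to be the main obstacle is precisely this final cohomological bookkeeping: one must check that, under the hypothesis $kc<n$ together with the constraint $\ell(\lambda)\le k$, every cohomology group on $\mathbb P^N$ that could contribute a nonzero class stays in the Bott vanishing range. Heuristically, the Koszul complex has length $c$ and each step twists by a negative $\mathcal O_{\mathbb P^N}(-d_i)$, while $\lambda$ involves at most $k$ exterior directions; the product $kc$ measures how far the relevant Hodge-type indices are displaced, and $kc<n=\dim X$ is exactly the threshold that keeps them below the middle cohomology, where $\Gamma^\lambda T^*_{\mathbb P^N}$ would first acquire sections after restriction. This matches the two extreme cases: for $\lambda=(1^p)$ one recovers the Lefschetz vanishing $H^0(X,\Lambda^p T^*_X)=H^{p,0}(X)=0$ for $p\le n-1$, and for $\lambda=(m)$ the absence of symmetric differentials. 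Specializing to $c=1$ gives the hypersurface statement $H^0(X,E_{k,m}T^*_X)=0$ for $1\le k\le n-1$, while the failure of $kc<n$ at $k=n$ accounts for the sharpness asserted after Theorem \ref{existence}.
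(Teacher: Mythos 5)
Your first step---reducing the statement to the vanishing of $H^0(X,\Gamma^{(\lambda)}T^*_X)$ for every nonzero partition $\lambda$ of length at most $k$, via the filtration of $E_{k,m}T^*_X$ inside the Green--Griffiths bundle and iterated Pieri---is exactly the reduction used in \cite{Div08}, where this theorem is proved (the same reduction is reproduced in the present paper for the logarithmic analogue, Theorem \ref{logvanishing}). The divergence, and the gap, lies in how you handle that key Schur-power vanishing. The paper does not reprove it: it invokes the theorem of Br\"uckmann--Rackwitz \cite{B-R90}, stated here as Theorem \ref{vanishtw}, whose hypothesis $t=\sum_{i=1}^{N-n}t_i<n$ is verified by the one-line observation that each column of the Young diagram of a partition of length $\le k$ contains at most $k$ cells, so $t\le k(N-n)<n$ precisely under the assumption $k<n/(N-n)$. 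Once \cite{B-R90} is quoted, this numerical check is the entire proof.

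Your plan to establish the Schur-power vanishing from scratch has a structural flaw. From the surjection $\Gamma^\lambda(T^*_{\mathbb P^N}|_X)\twoheadrightarrow\Gamma^\lambda T^*_X$ with kernel $K$, the long exact sequence gives $H^0(X,\Gamma^\lambda T^*_X)=0$ only if both $H^0$ of the middle term \emph{and} $H^1(K)$ vanish. The graded pieces of $K$ are of the form $\Gamma^\mu(N^*_{X/\mathbb P^N})\otimes\Gamma^\nu T^*_X$ with $\mu\ne 0$, i.e.\ negatively twisted Schur powers of $T^*_X$ itself, not of $T^*_{\mathbb P^N}|_X$; these terms cannot be \lq\lq resolved by Koszul and evaluated on $\mathbb P^N$ by Bott\rq\rq{}, they re-enter the same problem one cohomological degree higher, and iterating produces a cascade of required vanishings $H^q(X,\Gamma^\nu T^*_X\otimes\mathcal O_X(-a))=0$, $a>0$, whose verification is precisely the content of the Br\"uckmann--Rackwitz theorem. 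So your \lq\lq induction on the codimension\rq\rq{} does not close, and the final bookkeeping---which you yourself flag as the main obstacle, namely that $k(N-n)<n$ keeps everything in the Bott vanishing range---is asserted only heuristically. The heart of the theorem is therefore missing: the honest fix is either to carry out that substantial cohomological argument in full or to quote \cite{B-R90} as the paper does.
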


\section{Proof of Theorem \ref{existence}}

The idea of the proof is to apply the algebraic holomorphic Morse inequalities to a particular relatively nef line bundle over $X_n$ which admits a nontrivial morphism to (a power of) $\mathcal O_{X_n}(1)$ and then to conclude by the direct image argument of Theorem \ref{di}. 

\subsection{Sufficient conditions for relative nefness}

By definition, there is a canonical injection $\mathcal O_{X_k}(-1)\hookrightarrow\pi_k^*V_{k-1}$ and a composition with the differential of the projection $(\pi_k)_*$ yields, for all $k\ge 2$, a canonical line bundle morphism
$$
\mathcal O_{X_k}(-1)\hookrightarrow\pi_k^*V_{k-1}\to\pi_k^*\mathcal O_{X_{k-1}}(-1),
$$
which admits precisely $D_k\overset{\text{def}}=P(T_{X_{k-1}/X_{k-2}})\subset P(V_{k-1})=X_k$ as its zero divisor. Hence, we find
\begin{equation}\label{mor}
\mathcal O_{X_k}(1)=\pi_k^*\mathcal O_{X_{k-1}}(1)\otimes\mathcal O(D_k).
\end{equation}
Now, for $\mathbf a=(a_1,\dots,a_k)\in\mathbb Z^k$, define a line bundle $\mathcal O_{X_k}(\mathbf a)$ on $X_k$ as
$$
\mathcal O_{X_k}(\mathbf a)=\pi_{1,k}^*\mathcal O_{X_1}(a_1)\otimes\pi_{2,k}^*\mathcal O_{X_2}(a_2)\otimes\cdots\otimes\mathcal O_{X_k}(a_k).
$$
By (\ref{mor}), we have
$$
\pi_{j,k}^*\mathcal O_{X_j}(1)=\mathcal O_{X_k}(1)\otimes\mathcal O_{X_k}(-\pi_{j+1,k}^*D_{j+1}-\cdots-D_k),
$$
thus by putting $D_j^\star=\pi_{j+1,k}^*D_{j+1}$ for $j=1,\dots,k-1$ and $D_k^\star=0$, we have an identity
$$
\begin{aligned}
& \mathcal O_{X_k}(\mathbf a)=\mathcal O_{X_k}(b_k)\otimes\mathcal O_{X_k}(-\mathbf b\cdot D^\star),\quad\text{where} \\
& \mathbf b=(b_1,\dots,b_k)\in\mathbb Z^k,\quad b_j=a_1+\cdots+a_j, \\
& \mathbf b\cdot D^\star=\sum_{j=1}^{k-1}b_j\,\pi_{j+1,k}^*D_{j+1}.
\end{aligned}
$$
In particular, if $\mathbf b\in\mathbb N^k$, that is if $a_1+\cdots+a_j\ge 0$, we get a nontrivial morphism
$$
\mathcal O_{X_k}(\mathbf a)=\mathcal O_{X_k}(b_k)\otimes\mathcal O_{X_k}(-\mathbf b\cdot D^\star)\to\mathcal O_{X_k}(b_k).
$$
We then have the following:

\begin{proposition}[\cite{Demailly95}]
Let $\mathbf a=(a_1,\dots,a_k)\in\mathbb N^k$ and $m=a_1+\cdots+a_k$. 
\begin{itemize}
\item We have the direct image formula
$$
(\pi_{0,k})_*\mathcal O_{X_k}(\mathbf a)\simeq\mathcal O(\overline F^{\mathbf a}E_{k,m}V^*)\subset\mathcal O(E_{k,m}V^*)
$$
where $\overline F^{\mathbf a}E_{k,m}V^*$ is the subbundle of polynomials $Q(f',\dots,f^{(k)})$ of $E_{k,m}V^*$ involving only monomials $(f^{(\bullet)})^\ell$ such that
$$
\ell_{s+1}+2\ell_{s+2}+\cdots+(k-s)\ell_{k}\le a_{s+1}+\cdots+a_k
$$
for all $s=0,\dots,k-1$.
\item If 
\begin{equation}\label{relnef}
a_1\ge 3a_2,\dots,a_{k-2}\ge 3a_{k-1}\quad\text{and}\quad a_{k-1}\ge 2a_k> 0,
\end{equation}
the line bundle $\mathcal O_{X_k}(\mathbf a)$ is relatively nef over $X$.
\end{itemize}
\end{proposition}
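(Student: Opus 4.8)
The statement splits into two essentially independent parts, which I would prove separately.

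For the direct image formula I would argue by descending one level of the tower at a time. Writing $\mathbf a'=(a_1,\dots,a_{k-1})$ so that $\mathcal O_{X_k}(\mathbf a)=\pi_k^*\mathcal O_{X_{k-1}}(\mathbf a')\otimes\mathcal O_{X_k}(a_k)$, and using that $X_k=P(V_{k-1})$, the relative push-forward is $(\pi_k)_*\mathcal O_{X_k}(a_k)=S^{a_k}V_{k-1}^*$ for $a_k\ge 0$; the projection formula then gives $(\pi_{0,k})_*\mathcal O_{X_k}(\mathbf a)=(\pi_{0,k-1})_*\big(\mathcal O_{X_{k-1}}(\mathbf a')\otimes S^{a_k}V_{k-1}^*\big)$, and iterating down to $X$ realizes the push-forward as a subsheaf of $(\pi_{0,k})_*\mathcal O_{X_k}(m)=\mathcal O(E_{k,m}V^*)$ from Theorem \ref{di} (the inclusion being the morphism $\mathcal O_{X_k}(\mathbf a)=\mathcal O_{X_k}(b_k)\otimes\mathcal O(-\mathbf b\cdot D^\star)\hookrightarrow\mathcal O_{X_k}(b_k)$ already exhibited). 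The remaining point is to match this subsheaf with $\overline F^{\mathbf a}E_{k,m}V^*$: this is pure bookkeeping, since each factor $\mathcal O_{X_j}(1)$ records, through the lift $f\mapsto f_{[j]}$ and the tautological inclusion $\mathcal O_{X_j}(-1)\hookrightarrow\pi_j^*V_{j-1}$, exactly the order-$j$ derivative $f^{(j)}$, so that the partial-degree constraints $\ell_{s+1}+2\ell_{s+2}+\cdots\le a_{s+1}+\cdots+a_k$ are precisely the conditions cut out by the sub-sums $b_j=a_1+\cdots+a_j$. I would phrase this as a downward induction on $s$.

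The relative nefness is the real content, and here I would use induction on $k$ together with a curve-by-curve criterion. Since $\mathcal O_{X_k}(\mathbf a)$ is relatively nef over $X$ iff $\mathcal O_{X_k}(\mathbf a)\cdot C\ge 0$ for every irreducible curve $C\subset X_k$ contracted by $\pi_{0,k}$, I distinguish cases according to $C':=\pi_k(C)$. If $C'$ is a point, then $C$ lies in a fibre of $P(V_{k-1})\to X_{k-1}$, the pullback factor is trivial on $C$, and $\mathcal O_{X_k}(a_k)\cdot C=a_k\,\mathcal O_{X_k}(1)\cdot C\ge 0$ because $a_k>0$ and $\mathcal O_{X_k}(1)$ is relatively ample over $X_{k-1}$. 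When $C'$ is a curve, the crucial step is the factorization obtained from (\ref{mor}),
$$\mathcal O_{X_k}(\mathbf a)=\pi_k^*\mathcal O_{X_{k-1}}(a_1,\dots,a_{k-2},a_{k-1}+a_k)\otimes\mathcal O(a_k D_k),$$
in which the first factor is the pullback of a weighted bundle on $X_{k-1}$ and the second is effective. Here one checks that the shifted weight vector $(a_1,\dots,a_{k-2},a_{k-1}+a_k)$ still satisfies the hypotheses of the proposition at level $k-1$; this is exactly where the two numerical gaps combine, since $a_{k-2}\ge 3a_{k-1}\ge 2(a_{k-1}+a_k)$ precisely because $a_{k-1}\ge 2a_k$. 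By the inductive hypothesis the first factor is then relatively nef over $X$, so the pullback term meets $C$ non-negatively; and if $C\not\subset D_k$ the effective term gives $a_k\,D_k\cdot C\ge 0$, whence $\mathcal O_{X_k}(\mathbf a)\cdot C\ge 0$.

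The one remaining case, $C\subset D_k$, is where I expect the main difficulty. Here the effective term $\mathcal O(a_k D_k)\cdot C$ may be negative and must be controlled by restricting everything to $D_k=P(T_{X_{k-1}/X_{k-2}})$. On $D_k$ one has $\mathcal O_{X_k}(1)|_{D_k}=\mathcal O_{P(T_{X_{k-1}/X_{k-2}})}(1)$, and (\ref{ses2}) one level down expresses $T_{X_{k-1}/X_{k-2}}$ as a twist, by $\mathcal O_{X_{k-1}}(1)$, of a quotient of $\pi_{k-1}^*V_{k-2}$; feeding this into the computation recasts $\mathcal O_{X_k}(\mathbf a)|_{D_k}$ as a weighted bundle one step down the tower, but with the last weight shifted from $a_{k-1}$ to $a_{k-1}-a_k$ on account of the twist. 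The obstacle is precisely to verify that, after this shift, the triangular inequalities are still met—note $a_{k-2}\ge 3a_{k-1}\ge 2(a_{k-1}-a_k)$ and $a_{k-1}-a_k\ge a_k>0$, again using $a_{k-1}\ge 2a_k$—so that the inductive hypothesis applies, and to carry the constants $3$ and $2$ intact through the restriction. Granting this, $\mathcal O_{X_k}(\mathbf a)|_{D_k}$ is relatively nef and the last case closes, completing the induction.
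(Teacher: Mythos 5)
This proposition is stated in the paper as a quotation from \cite{Demailly95}; the paper itself contains no proof of it, so your attempt can only be measured against Demailly's original argument. For the direct image formula, your route ($(\pi_k)_*\mathcal O_{X_k}(a_k)=S^{a_k}V_{k-1}^*$, projection formula, and the injection induced by $\mathcal O_{X_k}(\mathbf a)\hookrightarrow\mathcal O_{X_k}(b_k)$) is the standard start, but what you dismiss as ``pure bookkeeping'' --- computing the vanishing order of each monomial $(f')^{\ell_1}\cdots(f^{(k)})^{\ell_k}$ along the divisors $\pi_{j+1,k}^*D_{j+1}$ and matching it against the partial sums $b_j$ --- is the entire content of that half of the statement; as an outline it is acceptable.

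The genuine gap is in the nefness part, precisely in the case $C\subset D_k$ that you yourself flag. Your restriction formula is correct: writing $T_{X_{k-1}/X_{k-2}}=Q_{k-1}\otimes\mathcal O_{X_{k-1}}(1)$, where $Q_{k-1}=\pi_{k-1}^*V_{k-2}/\mathcal O_{X_{k-1}}(-1)$ is the tautological quotient bundle, one gets
$$
\mathcal O_{X_k}(\mathbf a)|_{D_k}\simeq p^*\mathcal O_{X_{k-1}}(a_1,\dots,a_{k-2},a_{k-1}-a_k)\otimes\mathcal O_{P(Q_{k-1})}(a_k),\quad p=\pi_k|_{D_k}.
$$
But this is \emph{not} ``a weighted bundle one step down the tower'': $P(Q_{k-1})$ is not any $X_j$, and $\mathcal O_{P(Q_{k-1})}(1)$ is a new tautological class whose nefness along the fibres of $D_k\to X$ is equivalent to nefness of twists of the \emph{vector bundle} $Q_{k-1}^*=T^*_{X_{k-1}/X_{k-2}}\otimes\mathcal O_{X_{k-1}}(1)$ along the fibres of $X_{k-1}\to X$. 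Your inductive hypothesis --- relative nefness of the \emph{line} bundles $\mathcal O_{X_{k-1}}(\mathbf a')$ --- contains no such information, and no formal reduction exists: $Q_{k-1}^*$ is a subbundle of $\pi_{k-1}^*V_{k-2}^*$, and subbundles do not inherit nefness. The failure is already visible for $k=2$: on a fibre $\mathbb P^{r-1}$ of $X_1\to X$ one has $Q_1|_\ell\simeq\mathcal O_\ell(1)\oplus\mathcal O_\ell^{\oplus r-2}$ for any line $\ell$, so $D_2$ contains a section $C$ over $\ell$ with $\mathcal O_{P(Q_1)}(1)\cdot C=-1$, contracted by $\pi_{0,2}$; ruling out negativity of $\mathcal O_{X_2}(\mathbf a)\cdot C=(a_1-a_2)-a_2$ is exactly the classical fact that $\Omega^1_{\mathbb P^{r-1}}(2)$ is nef, an input nowhere in your induction (and it shows $a_1\ge 2a_2$ is sharp). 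For $k\ge 3$ one needs in addition control of exterior powers, via the quotient presentation of $\Lambda^pQ_j^*\otimes\mathcal O_{X_j}(1)$ by $\pi_j^*(\Lambda^{p+1}V_{j-1})^*$, and it is this recursion through $\Lambda^\bullet V_j^*$ that actually produces the constants $3$ and $2$ in condition (\ref{relnef}). Accordingly, Demailly's proof runs the induction on a stronger, vector-bundle statement (relative nefness of suitable twists of $V_j^*$ and of its exterior powers, equivalently of the relative cotangent bundles), from which the line-bundle assertion follows. Your remaining cases --- $C$ in a fibre of $\pi_k$, and $C\not\subset D_k$ with the shifted weight $(a_1,\dots,a_{k-2},a_{k-1}+a_k)$ --- are correct, including the numerical verifications; but as written the induction does not close.
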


From now on, we will set in the absolute case, that is $V=T_X$. So, let $X\subset\mathbb P^{n+1}$ be a smooth complex hypersurface of degree $\deg X=d$. 

Now, for the projective hypersurfaces case, it is always possible to express $\mathcal O_{X_k}(\mathbf a)$ as the difference of two globally nef line bundles, provided condition (\ref{relnef}) is satisfied. We prove this fact in the next:

\begin{proposition}
Let $X\subset\mathbb P^{n+1}$ be a smooth projective hypersurface and $\mathcal O_X(1)$ be the hyperplane divisor on $X$. If condition (\ref{relnef}) holds, then $\mathcal O_{X_k}(\mathbf a)\otimes\pi_{0,k}^*\mathcal O_X(\ell)$ is nef provided that $\ell\ge 2|\mathbf a|$, where $|\mathbf a|=a_1+\cdots +a_k$.

In particular $\mathcal O_{X_k}(\mathbf a)=\bigl(\mathcal O_{X_k}(\mathbf a)\otimes\pi_{0,k}^*\mathcal O_X(2|\mathbf a|)\bigr)\otimes\pi_{0,k}^*\mathcal O_X(-2|\mathbf a|)$ and both $\mathcal O_{X_k}(\mathbf a)\otimes\pi_{0,k}^*\mathcal O_X(2|\mathbf a|)$ and $\pi_{0,k}^*\mathcal O_X(2|\mathbf a|)$ are nef.
\end{proposition}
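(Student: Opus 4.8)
The plan is to prove the single nefness assertion ``$\mathcal O_{X_k}(\mathbf a)\otimes\pi_{0,k}^*\mathcal O_X(\ell)$ is nef for $\ell\ge 2|\mathbf a|$'' by induction on the length $k$ of the tower; the ``in particular'' part then follows by taking $\ell=2|\mathbf a|$ and observing that $\pi_{0,k}^*\mathcal O_X(2|\mathbf a|)$ is nef, being the pullback of an ample bundle. For the base case $k=1$ I would use that $\mathcal O_{X_1}(a_1)\otimes\pi_{0,1}^*\mathcal O_X(2a_1)$ is the $a_1$-th tensor power of $\mathcal O_{X_1}(1)\otimes\pi_{0,1}^*\mathcal O_X(2)$, which is a quotient of $\pi_{0,1}^*\bigl(\Omega^1_X\otimes\mathcal O_X(2)\bigr)$, since $\mathcal O_{X_1}(1)$ is a quotient of $\pi_1^*V^*=\pi_1^*\Omega^1_X$. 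The latter bundle is globally generated: the conormal sequence exhibits $\Omega^1_X(2)$ as a quotient of $\Omega^1_{\mathbb P^{n+1}}(2)|_X$, and $\Omega^1_{\mathbb P^{n+1}}(2)$ is globally generated (a classical fact). Since a quotient of a globally generated bundle is nef, the base case holds.

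For the inductive step I would write, with $\mathbf a'=(a_1,\dots,a_{k-1})$ and $\ell=2|\mathbf a|$,
\[
\mathcal O_{X_k}(\mathbf a)\otimes\pi_{0,k}^*\mathcal O_X(\ell)=\mathcal O_{X_k}(a_k)\otimes\pi_k^*\bigl(\mathcal O_{X_{k-1}}(\mathbf a')\otimes\pi_{0,k-1}^*\mathcal O_X(\ell)\bigr).
\]
Condition (\ref{relnef}) for $\mathbf a$ forces the same condition for $\mathbf a'$ and gives $\ell\ge 2|\mathbf a'|$, so by induction the factor in parentheses is nef; the whole difficulty is to absorb the non-nef factor $\mathcal O_{X_k}(a_k)$. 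Here the decisive tool is the standard projective-bundle criterion: on $X_k=P(V_{k-1})$, for a line bundle $N$ on $X_{k-1}$ the twist $\mathcal O_{X_k}(1)\otimes\pi_k^*N$ is nef if and only if $V_{k-1}^*\otimes N$ is nef. I would therefore single out an \emph{economical} nef line bundle $N$ on $X_{k-1}$ with $V_{k-1}^*\otimes N$ nef and with $N$ dominated, with multiplicity $a_k$, by $M=\mathcal O_{X_{k-1}}(\mathbf a')\otimes\pi_{0,k-1}^*\mathcal O_X(\ell)$, and then factor
\[
\mathcal O_{X_k}(a_k)\otimes\pi_k^*M=\bigl(\mathcal O_{X_k}(1)\otimes\pi_k^*N\bigr)^{\otimes a_k}\otimes\pi_k^*\bigl(M\otimes N^{-a_k}\bigr)
\]
into a product of nef bundles. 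The nefness of $V_{k-1}^*\otimes N$ would be obtained by a parallel induction along the tower, feeding the exact sequences (\ref{ses1}) and (\ref{ses2}) into the same criterion.

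The main obstacle is exactly the positivity of $V_{k-1}^*\otimes N$, and it is genuinely delicate because the relative cotangent bundle enters as a \emph{sub}bundle, not a quotient: dualizing (\ref{ses2}) realizes $T^*_{X_j/X_{j-1}}$ as a subbundle of $\pi_j^*V_{j-1}^*\otimes\mathcal O_{X_j}(-1)$, and subbundles of nef bundles need not be nef --- indeed on a fibre of $\pi_j$, isomorphic to $\mathbb P^{r-1}$, it restricts to $\Omega^1_{\mathbb P^{r-1}}$, which is not nef. The remedy I would use is that the twist $\mathcal O_{X_j}(2)$ turns this fibrewise restriction into $\Omega^1_{\mathbb P^{r-1}}(2)$, which \emph{is} globally generated; combining this, through the dualized extension (\ref{ses1}), with the inductively controlled quotient $\mathcal O_{X_j}(1)$ lets the positivity propagate up the tower. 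The point of the geometric decay in (\ref{relnef}) (the inequalities $a_{j}\ge 3a_{j+1}$ together with $a_{k-1}\ge 2a_k>0$) is precisely that, after subtracting the multiple $N^{-a_k}$ needed to pay for the tautological twist at each level, the remaining weights still satisfy (\ref{relnef}) at the previous level, so the induction closes while the base contribution stays at the uniform rate $\mathcal O_X(2)$ per unit of weight; summing over the $k$ levels then produces the clean bound $\ell\ge 2|\mathbf a|$. Verifying this bookkeeping --- that the uniform constant $2$, rather than a constant growing with $k$, suffices, which ultimately rests on the relation $b_j=a_1+\cdots+a_j\ge 0$ underlying the morphism (\ref{mor}) --- is the quantitative heart of the argument.
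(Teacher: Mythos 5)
Your outer induction is, in structure, exactly the paper's own proof: the paper too peels off $a_k$ copies of an anchor bundle of the form $\mathcal O_{X_k}(1)\otimes\pi_k^*N$ and applies the inductive hypothesis to the remainder $M\otimes N^{-a_k}$, whose weights again satisfy (\ref{relnef}) and whose $\mathcal O_X$-twist is again exactly twice the sum of the remaining weights. Your base case $k=1$ (quotient of $\pi_{0,1}^*\bigl(\Omega^1_X\otimes\mathcal O_X(2)\bigr)$, with $\Omega^1_{\mathbb P^{n+1}}(2)$ globally generated) is correct, as are the descent of (\ref{relnef}) and of the bound $\ell\ge 2|\mathbf a'|$ to $\mathbf a'$, and the projective-bundle criterion that $\mathcal O_{X_k}(1)\otimes\pi_k^*N$ is nef if and only if $V_{k-1}^*\otimes N$ is nef. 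The difference is that the paper takes the anchor to be the explicit bundle $N=\mathcal O_{X_{k-1}}(2\cdot 3^{k-2},\dots,6,2)\otimes\pi_{0,k-1}^*\mathcal O_X(2\cdot 3^{k-1})$ and obtains the nefness of $\mathcal O_{X_k}(1)\otimes\pi_k^*N=\mathcal O_{X_k}(2\cdot 3^{k-2},\dots,6,2,1)\otimes\pi_{0,k}^*\mathcal O_X(2\cdot 3^{k-1})$ by citing \cite{Div08}; with that choice your condition on $M\otimes N^{-a_k}$ reduces to the same computation the paper performs.

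The genuine gap is that you never exhibit $N$ and never prove the nefness of $V_{k-1}^*\otimes N$, which you yourself identify as the main obstacle; moreover the remedy you sketch would not suffice as stated. Fibrewise positivity of $\Omega^1_{\mathbb P^{r-1}}(2)$ cannot by itself propagate nefness, because nefness is not detected on the fibres of a fibration: on the Hirzebruch surface $\mathbb F_1\to\mathbb P^1$ the line bundle $\mathcal O(\sigma)$ attached to the $(-1)$-section $\sigma$ has degree $1$ on every fibre, yet $\mathcal O(\sigma)\cdot\sigma=-1$. What actually makes the positivity climb the tower is a device exhibiting the twisted relative cotangent bundle as a \emph{quotient} of a nef bundle. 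For instance, in the dual of (\ref{ses2}) the quotient term is a line bundle, so the second exterior power of the middle term surjects onto $(\mathrm{sub})\otimes(\mathrm{quotient})$, which after twisting reads
$$
\pi_j^*\Lambda^2V_{j-1}^*\otimes\mathcal O_{X_j}(2)\twoheadrightarrow T^*_{X_j/X_{j-1}}\otimes\mathcal O_{X_j}(4).
$$
Hence if $V_{j-1}^*\otimes N_{j-1}$ is nef, then $T^*_{X_j/X_{j-1}}\otimes\mathcal O_{X_j}(2)\otimes\pi_j^*N_{j-1}^{\otimes 2}$ is nef (quotient of the pullback of $\Lambda^2(V_{j-1}^*\otimes N_{j-1})$), and feeding this and the sub-line-bundle $\mathcal O_{X_j}(1)$ into the dual of (\ref{ses1}) (an extension of nef by nef is nef) gives that $V_j^*\otimes N_j$ is nef for $N_j=\mathcal O_{X_j}(2)\otimes\pi_j^*N_{j-1}^{\otimes 3}$. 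Starting from $N_0=\mathcal O_X(2)$ (your conormal-sequence argument), this recursion produces precisely the anchor $N$ above; note the ratios $3$ and $2$ appearing in $N_j$ — this is where condition (\ref{relnef}) and the constant in the bound $2|\mathbf a|$ actually come from. In other words, the "bookkeeping" you defer is not a routine verification but the entire content of the result of \cite{Div08} on which the paper's proof rests; until that lemma is proved or invoked, your proposal is a correct skeleton whose load-bearing step is missing.
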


\begin{proof}
In \cite{Div08} we proved that the line bundle 
$$
\mathcal O_{X_k}(2\cdot 3^{k-2},2\cdot 3^{k-3},\dots,6,2,1)\otimes\pi_{0,k}^*\mathcal O_X(\ell)
$$ 
is nef as soon as $\ell\ge 2\cdot(1+2+6+\cdots+2\cdot 3^{k-2})=2\cdot 3^{k-1}$. Now we take $\mathbf a=(a_1,\dots,a_k)\in\mathbb N^{k}$ such that $a_1\ge 3a_2,\dots,a_{k-2}\ge 3a_{k-1},a_{k-1}\ge 2a_k>0$ and we proceed by induction, the case $k=1$ being obvious. Write
$$
\begin{aligned}
\mathcal O_{X_k}&(a_1,a_2,\dots,a_k)\otimes\pi_{0,k}^*\mathcal O_X(2\cdot(a_1+\cdots+a_k)) \\
&=\bigr(\mathcal O_{X_k}(2\cdot 3^{k-2},\dots,6,2,1)\otimes\pi_{0,k}^*\mathcal O_X(2\cdot 3^{k-1})\bigl)^{\otimes a_k} \\
&\quad\otimes\pi_{k}^*\biggl(\mathcal O_{X_{k-1}}(a_1-2\cdot 3^{k-2}a_k,\dots,a_{k-2}-6a_k,a_{k-1}-2a_k) \\
&\quad\otimes\pi_{0,k-1}^*\mathcal O_X\bigl(2\cdot(a_1+\dots+a_k-3^{k-1}a_k)\bigr)\biggr).
\end{aligned}
$$
Therefore, we have to prove that 
$$
\begin{aligned}
&\mathcal O_{X_{k-1}}(a_1-2\cdot 3^{k-2}a_k,\dots,a_{k-2}-6a_k,a_{k-1}-2a_k) \\
&\quad\quad\otimes\pi_{0,k-1}^*\mathcal O_X\bigl(2\cdot(a_1+\dots+a_k-3^{k-1}a_k)\bigr)
\end{aligned}
$$
is nef.
Our chain of inequalities gives, for $1\le j\le k-2$, $a_j\ge 3^{k-j-1}a_k$ and $a_{k-1}\ge 2a_k$. Thus, condition (\ref{relnef}) is satisfied by the weights of
$$
\mathcal O_{X_{k-1}}(a_1-2\cdot 3^{k-2}a_k,\dots,a_{k-2}-6a_k,a_{k-1}-2a_k)
$$
and $2\cdot(a_1+\dots+a_k-3^{k-1}a_k)$ is exactly twice the sum of these weights. 
\end{proof}

\begin{remark}\label{rem1}
At this point it should be clear that  to prove our Theorem, it is sufficient to show the existence of an $n$-tuple $(a_1,\dots,a_n)$ satisfying condition (\ref{relnef}) and such that
\begin{equation}\label{*}
\begin{aligned}
\bigl(\mathcal O_{X_n}&(\mathbf a)\otimes\pi_{0,n}^*\mathcal O_X(2|\mathbf a|)\bigr)^{n^2} \\
&-n^2\bigl(\mathcal O_{X_n}(\mathbf a)\otimes\pi_{0,n}^*\mathcal O_X(2|\mathbf a|)\bigr)^{n^2-1}\cdot\pi_{0,n}^*\mathcal O_X(2|\mathbf a|)>0
\end{aligned}
\end{equation}
for $d=\deg X$ large enough, where $n^2=n+n(n-1)=\dim X_n$.

In fact, this would show the bigness of $\mathcal O_{X_n}(\mathbf a)\hookrightarrow\mathcal O_{X_n}(|\mathbf a|)$ and so the bigness of $\mathcal O_{X_n}(1)$.
\end{remark}

\subsection{Evaluation in terms of the degree}

For $X\subset\mathbb P^{n+1}$ a smooth projective hypersurface of degree $\deg X=d$, standard arguments involving the Euler exact sequence, show that
$$
c_j(X)=c_j(T_X)=h^j\bigl((-1)^jd^j+o(d^j)\bigr),\quad j=1,\dots,n,
$$
where $h\in H^2(X,\mathbb Z)$ is the hyperplane class and $o(d^j)$ is a polynomial in $d$ of degree at most $j-1$.

\begin{proposition}\label{nef}
The quantities
$$
\begin{aligned}
&\bigl(\mathcal O_{X_k}(\mathbf a)\otimes\pi_{0,k}^*\mathcal O_X(2|\mathbf a|)\bigr)^{n+k(n-1)} \\
& -[n+k(n-1)]\bigl(\mathcal O_{X_k}(\mathbf a)\otimes\pi_{0,k}^*\mathcal O_X(2|\mathbf a|)\bigr)^{n+k(n-1)-1}\cdot\pi_{0,k}^*\mathcal O_X(2|\mathbf a|)
\end{aligned}
$$
and
$$
\mathcal O_{X_k}(\mathbf a)^{n+k(n-1)}
$$
are both polynomials in the variable $d$ with coefficients in $\mathbb Z[a_1,\dots,a_k]$ of degree at most $n+1$ and the coefficients of $d^{n+1}$ of the two expressions are equal.

Moreover this coefficient is a homogeneous polynomial in $a_1,\dots,a_k$ of degree $n+k(n-1)$ or identically zero.
\end{proposition}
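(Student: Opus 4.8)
The plan is to translate both quantities into top self-intersection numbers on $X_k$ and to read off their leading behaviour in $d$. Write $N=n+k(n-1)=\dim X_k$; since we are in the absolute case $V=T_X$ we have $r=\rank V=n$, so each $\pi_j$ has $\mathbb P^{n-1}$ as typical fibre and $\pi_{0,k}$ has relative dimension $k(n-1)$. I phrase everything in terms of first Chern classes by setting
\[
\alpha=c_1\bigl(\mathcal O_{X_k}(\mathbf a)\bigr)=\sum_{j=1}^k a_j\,\pi_{j,k}^*u_j,\qquad g=\pi_{0,k}^*h,
\]
so that $c_1\bigl(\mathcal O_{X_k}(\mathbf a)\otimes\pi_{0,k}^*\mathcal O_X(2|\mathbf a|)\bigr)=\alpha+2|\mathbf a|\,g$ and $c_1\bigl(\pi_{0,k}^*\mathcal O_X(2|\mathbf a|)\bigr)=2|\mathbf a|\,g$. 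The two expressions become $\int_{X_k}\alpha^N$ and $\int_{X_k}\bigl[(\alpha+2|\mathbf a|g)^N-N(\alpha+2|\mathbf a|g)^{N-1}(2|\mathbf a|g)\bigr]$. Throughout I will use that the cohomology ring of $X_k$ is generated over $H^\bullet(X)$ by $u_1,\dots,u_k$ subject to the relations (\ref{chern3}), that the \emph{only} $d$-dependent inputs to these relations are the Chern classes $c_l(V)=c_l(X)=P_l(d)\,h^l$ with $\deg_d P_l=l$ and leading coefficient $(-1)^l$, and that $\int_X h^n=d$.

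The algebraic core is the identity
\[
(\alpha+2|\mathbf a|g)^N-N(\alpha+2|\mathbf a|g)^{N-1}(2|\mathbf a|g)=\sum_{i=0}^N(1-i)\binom{N}{i}\,\alpha^{N-i}(2|\mathbf a|g)^i,
\]
which follows at once from $N\binom{N-1}{i-1}=i\binom{N}{i}$. The $i=0$ term is exactly $\alpha^N$ and the $i=1$ term vanishes, so the \emph{difference} between the first and the second quantity equals $\sum_{i\ge2}(1-i)\binom{N}{i}(2|\mathbf a|)^i\,\alpha^{N-i}g^i$; every surviving term carries a factor $g^i=\pi_{0,k}^*h^i$ with $i\ge2$. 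This is the whole point of subtracting the correction term $-N\,L^{N-1}\!\cdot G$: it is tailored to cancel the $i=0,1$ parts and leave only these $g^{\ge2}$ contributions.

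The main work, and the step I expect to be the real obstacle, is the degree-in-$d$ bookkeeping. I would first record that any class on $X$ lying in the subring generated by $c_\bullet(X)$ and $h$ and homogeneous of cohomological degree $m$ has the form $R(d)\,h^m$ with $\deg_d R\le m$: indeed a monomial $h^q\prod_l c_{j_l}(X)$ equals $\bigl(\prod_l P_{j_l}(d)\bigr)h^{\,q+\sum_l j_l}$, of $d$-degree $\sum_l j_l=m-q\le m$. Second, the push-forward $(\pi_{0,k})_*$ is computed by iterated integration along the $\mathbb P^{n-1}$-fibres using (\ref{chern3}), and since those relations have integer coefficients, it sends any polynomial in the $u_j$ into the subring generated by $c_\bullet(X)$ and $h$, the $d$-dependence entering solely through the $c_l(X)$. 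Combining this with the projection formula, a term $\alpha^{N-i}g^i$ pushes to $h^i\,(\pi_{0,k})_*\alpha^{N-i}=h^i\,R(d)\,h^{\,n-i}$ with $\deg_d R\le n-i$, whence $\int_{X_k}\alpha^{N-i}g^i=d\,R(d)$ has $d$-degree $\le n-i+1$. For $i=0$ this yields the bound $n+1$ for each of the two quantities; for the difference, where every term has $i\ge2$, it yields $d$-degree $\le n-1$. Therefore the coefficients of $d^{n+1}$ (in fact of $d^n$ as well) of the two quantities coincide.

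Finally, both $\alpha+2|\mathbf a|g$ and $2|\mathbf a|g$ are linear homogeneous of degree $1$ in $a_1,\dots,a_k$, so before integration each of the two quantities is a homogeneous polynomial of degree $N$ in the $a_j$ with cohomology-class coefficients; as $\int_{X_k}(\cdot)$ is $\mathbb Z[d]$-linear, it preserves this homogeneity. Hence each quantity is a polynomial in $d$ whose every $d$-coefficient is homogeneous of degree $N$ in the $a_j$, and in particular the common coefficient of $d^{n+1}$ is homogeneous of degree $N=n+k(n-1)$ in $a_1,\dots,a_k$, or identically zero. The only subtle point that remains to be made fully rigorous is the estimate of the third paragraph, namely that each free factor $g$ strictly lowers the maximal attainable $d$-degree.
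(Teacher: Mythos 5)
Your proof is correct and follows essentially the same route as the paper's: both reduce everything via the Chern relations (\ref{chern2}), (\ref{chern3}) to combinations of monomials $h^i\,c_1(X)^{j_1}\cdots c_n(X)^{j_n}$ with $i+j_1+2j_2+\cdots+nj_n=n$, observe that the Morse correction terms all carry a factor $\pi_{0,k}^*h$ so that each contributes $d$-degree at most $n+1-i<n+1$, and obtain the homogeneity of degree $n+k(n-1)$ in the $a_j$ from the structure of that reduction. The only (inessential) refinement on your side is the explicit binomial identity showing the $i=1$ term cancels exactly, which sharpens the bound on the difference from $O(d^{n})$ to $O(d^{n-1})$; also, the step you flag at the end as needing more rigour is in fact already fully proved by your push-forward argument in the third paragraph.
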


\begin{proof}
Set $\mathcal F_k(\mathbf a)=\mathcal O_{X_k}(\mathbf a)\otimes\pi_{0,k}^*\mathcal O_X(2|\mathbf a|)$ and $\mathcal G_k(\mathbf a)=\pi_{0,k}^*\mathcal O_X(2|\mathbf a|)$. Then we have
$$
\begin{aligned}
\mathcal F_k(\mathbf a)&^{n+k(n-1)}+[n+k(n-1)]\mathcal F_k(\mathbf a)^{n+k(n-1)-1}\cdot\mathcal G_k(\mathbf a)\\
&=\mathcal O_{X_k}(\mathbf a)^{n+k(n-1)}+\text{\rm terms which have $\mathcal G_k(\mathbf a)$ as a factor}.
\end{aligned}
$$
Now we use relations (\ref{chern2}) and (\ref{chern3}) to observe that
$$
\mathcal O_{X_k}(\mathbf a)^{n+k(n-1)}=\sum_{j_1+2j_2+\cdots+nj_n=n}P_{j_1\cdots j_n}^{[k]}(\mathbf a)\,c_1(X)^{j_1}\cdots c_n(X)^{j_n},
$$
where the $P_{j_1\cdots j_n}^{[k]}(\mathbf a)$'s are homogeneous polynomial of degree $n+k(n-1)$ in the variables $a_1,\dots,a_k$ (or possibly identically zero). Thus, substituting the $c_j(X)$'s with their expression in terms of the degree, we get
$$
\mathcal O_{X_k}(\mathbf a)^{n+k(n-1)}=(-1)^n\left(\sum_{j_1+2j_2+\cdots+nj_n=n}P_{j_1\cdots j_n}^{[k]}(\mathbf a)\right)d^{n+1}+o(d^{n+1}),
$$
since $h^n=d$. On the other hand, using relations (\ref{chern2}) and (\ref{chern3}) on terms which have $\mathcal G_k(\mathbf a)$ as a factor, gives something of the form
$$
\sum_{\genfrac{}{}{0pt}{}{j_1+2j_2+\cdots+nj_n+i=n}{i>0}} Q_{j_1\cdots j_ni}^{[k]}(\mathbf a)\,h^i\cdot c_1(X)^{j_1}\cdots c_n(X)^{j_n},
$$
since $c_1(\mathcal G_k(\mathbf a))=2|\mathbf a|h$ and $\mathcal G_k(\mathbf a)$ is always a factor. Substituting the $c_j(X)$'s with their expression in terms of the degree, we get here
$$
h^i\cdot c_1(X)^{j_1}\cdots c_n(X)^{j_n}=(-1)^{j_1+\cdots+j_n}\underbrace{h^n}_{=d}\cdot d^{j_1+\dots+j_n}=o(d^{n+1}).
$$
\end{proof}

We need a lemma.

\begin{lemma}\label{Zar}
Let $\mathfrak C\subset\mathbb R^k$ be a cone with nonempty interior. Let $\mathbb Z^k\subset\mathbb R^k$ be the canonical lattice in $\mathbb R^k$. Then $\mathbb Z^k\cap\mathfrak C$ is Zariski dense in $\mathbb R^k$.
\end{lemma}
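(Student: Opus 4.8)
The plan is to show that any polynomial $P\in\mathbb R[x_1,\dots,x_k]$ which vanishes identically on $\mathbb Z^k\cap\mathfrak C$ must be the zero polynomial; this is precisely the assertion that $\mathbb Z^k\cap\mathfrak C$ is Zariski dense in $\mathbb R^k$. The heart of the matter is an elementary interpolation fact: if a polynomial vanishes on a product grid $S_1\times\cdots\times S_k$ with each $S_i\subset\mathbb Z$ of cardinality strictly larger than $\deg P$, then it vanishes identically. So the whole problem reduces to producing, inside $\mathfrak C$, an axis-parallel box containing such a grid of arbitrarily large size.

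First I would exploit the two hypotheses on $\mathfrak C$. Since $\mathfrak C$ has nonempty interior, pick an interior point $v_0$ and a radius $\varepsilon>0$ with the open ball $B(v_0,\varepsilon)\subset\mathfrak C$. Because $\mathfrak C$ is a cone, $t\,B(v_0,\varepsilon)=B(tv_0,t\varepsilon)\subset\mathfrak C$ for every $t>0$, so $\mathfrak C$ contains Euclidean balls of arbitrarily large radius. A ball of radius $R$ contains the axis-parallel cube of side $2R/\sqrt k$ centred at the same point; hence for every integer $N$ there is a box $\prod_{i=1}^k[\alpha_i,\beta_i]\subset\mathfrak C$ with $\beta_i-\alpha_i\ge N$ for all $i$. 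Choosing $N>\deg P$, each interval $[\alpha_i,\beta_i]$ contains a set $S_i$ of more than $\deg P$ integers, and then the grid $S_1\times\cdots\times S_k$ is contained in $\mathbb Z^k\cap\mathfrak C$, hence in the zero locus of $P$.

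Next I would invoke the interpolation fact, proved by induction on $k$. For $k=1$ it is just that a nonzero one-variable polynomial of degree $\le\deg P$ has at most $\deg P$ roots. For the inductive step, write $P=\sum_j P_j(x_1,\dots,x_{k-1})\,x_k^j$; fixing the first $k-1$ coordinates at any point of $S_1\times\cdots\times S_{k-1}$ gives a univariate polynomial in $x_k$ of degree $\le\deg P$ vanishing on $S_k$, hence identically zero, so every coefficient $P_j$ vanishes on $S_1\times\cdots\times S_{k-1}$; since $\deg P_j\le\deg P$, the inductive hypothesis forces $P_j\equiv0$ for all $j$, whence $P\equiv0$. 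Applying this to our grid yields $P\equiv0$, completing the argument.

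The only point requiring genuine care — and the step I expect to be the main obstacle — is guaranteeing that the large balls inside $\mathfrak C$ can be replaced by \emph{axis-parallel} boxes aligned with the lattice $\mathbb Z^k$, so that the lattice points they contain genuinely form a product grid of the prescribed size in each coordinate direction; the inscribed-cube estimate $2R/\sqrt k$ handles this cleanly, and everything else is formal.
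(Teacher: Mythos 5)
Your proposal is correct and follows essentially the same route as the paper: both arguments use the cone structure and nonempty interior to place arbitrarily large axis-parallel integer grids inside $\mathfrak C$, and then kill any polynomial of bounded degree by induction on the dimension. You merely fill in the details (the scaling of balls and the inscribed-cube estimate, and the explicit coefficient-wise induction) that the paper's terse proof leaves implicit.
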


\begin{proof}
Since $\mathfrak C$ is a cone with nonempty interior, it contains cubes of arbitrary large edges, so $\mathbb Z^k\cap\mathfrak C$ contains a product of integer intervals $\prod[\alpha_i,\beta_i]$ with $\beta_i-\alpha_i>N$. By using induction on dimension, this implies that a polynomial $P$ of degree at most $N$ vanishing on $\mathbb Z^k\cap\mathfrak C$ must be identically zero. As $N$ can be taken arbitrary large, we conclude that  $\mathbb Z^k\cap\mathfrak C$ is Zariski dense.\end{proof}

\begin{corollary}\label{bigness1}
If the top self-intersection $\mathcal O_{X_k}(\mathbf a)^{n+k(n-1)}$ has degree exac\-tly equal to $n+1$ in $d$ for some choice of $\mathbf a$, then $\mathcal O_{X_k}(m)\otimes\pi_{0,k}^*A^{-1}$ has a global section for all line bundle $A\to X$ and for all $d,m$ sufficiently large.
\end{corollary}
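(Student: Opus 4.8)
The plan is to run the algebraic holomorphic Morse inequalities on $X_k$ and to push the resulting sections down to $\mathcal O_{X_k}(1)$. Set $N=n+k(n-1)=\dim X_k$ and, as in the proof of Proposition~\ref{nef}, write $\mathcal F_k(\mathbf a)=\mathcal O_{X_k}(\mathbf a)\otimes\pi_{0,k}^*\mathcal O_X(2|\mathbf a|)$ and $\mathcal G_k(\mathbf a)=\pi_{0,k}^*\mathcal O_X(2|\mathbf a|)$, so that $\mathcal O_{X_k}(\mathbf a)=\mathcal F_k(\mathbf a)\otimes\mathcal G_k(\mathbf a)^{-1}$ is a difference of two globally nef line bundles as soon as $\mathbf a$ satisfies condition~(\ref{relnef}). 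By Proposition~\ref{nef} the Morse quantity $\mathcal F_k(\mathbf a)^{N}-N\,\mathcal F_k(\mathbf a)^{N-1}\cdot\mathcal G_k(\mathbf a)$ and the self-intersection $\mathcal O_{X_k}(\mathbf a)^{N}$ are polynomials in $d$ of degree at most $n+1$ sharing the same coefficient $P(\mathbf a)$ of $d^{n+1}$, a homogeneous polynomial of degree $N$ in $\mathbf a$; moreover, since every term of $\mathcal F_k(\mathbf a)^{N}$ other than $\mathcal O_{X_k}(\mathbf a)^{N}$ carries a factor $\mathcal G_k(\mathbf a)$ and is thus $o(d^{n+1})$ by the computation in that proposition, we likewise have $\mathcal F_k(\mathbf a)^{N}=P(\mathbf a)\,d^{n+1}+o(d^{n+1})$. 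The hypothesis of the corollary is exactly the statement that $P\not\equiv 0$.

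First I would pin down the sign of $P$ on the relevant cone. Let $\mathfrak C\subset\mathbb R^k$ be the full-dimensional cone cut out by condition~(\ref{relnef}); it has nonempty interior. For every integral $\mathbf a\in\mathfrak C$ the bundle $\mathcal F_k(\mathbf a)$ is nef, so its top self-intersection satisfies $\mathcal F_k(\mathbf a)^{N}\ge 0$; dividing the expansion $\mathcal F_k(\mathbf a)^{N}=P(\mathbf a)\,d^{n+1}+o(d^{n+1})$ by $d^{n+1}$ and letting $d\to\infty$ forces $P(\mathbf a)\ge 0$. As the integral points are dense in $\mathfrak C$ and $P$ is continuous, we obtain $P\ge 0$ on all of $\mathfrak C$. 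Combined with $P\not\equiv 0$ and the fact that a polynomial vanishing on a nonempty open set is identically zero, $P$ must be strictly positive somewhere in $\operatorname{int}\mathfrak C$. The set $\{P>0\}\cap\operatorname{int}\mathfrak C$ is then a nonempty open cone (the first factor being a cone by homogeneity of $P$, the second because $\mathfrak C$ is a cone), so Lemma~\ref{Zar} provides an integral weight $\mathbf a\in\mathbb Z^k\cap\mathfrak C$ with $P(\mathbf a)>0$.

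With such an $\mathbf a$ fixed (independently of $d$), the Morse quantity $\mathcal F_k(\mathbf a)^{N}-N\,\mathcal F_k(\mathbf a)^{N-1}\cdot\mathcal G_k(\mathbf a)$ has positive leading coefficient $P(\mathbf a)$, hence is $>0$ for all $d$ large. Applying the algebraic holomorphic Morse inequalities in Trapani's form \cite{Trapani95} to $L=\mathcal O_{X_k}(\mathbf a)=\mathcal F_k(\mathbf a)\otimes\mathcal G_k(\mathbf a)^{-1}$ and to the rank-one bundle $E=\pi_{0,k}^*A^{-1}$ yields a nonzero section of $\mathcal O_{X_k}(\mathbf a)^{\otimes m}\otimes\pi_{0,k}^*A^{-1}$ for $m$ large. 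Since $\mathbf a\in\mathfrak C$ has strictly positive entries, all its partial sums are positive, so there is a nontrivial (hence injective) morphism $\mathcal O_{X_k}(\mathbf a)^{\otimes m}\to\mathcal O_{X_k}(m|\mathbf a|)$; composing produces a nonzero section of $\mathcal O_{X_k}(m|\mathbf a|)\otimes\pi_{0,k}^*A^{-1}=\mathcal O_{X_k}(m')\otimes\pi_{0,k}^*A^{-1}$ with $m'=m|\mathbf a|$, which is the asserted conclusion and shows the bigness of $\mathcal O_{X_k}(1)$.

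The main obstacle is the sign of the leading coefficient: knowing merely $P\not\equiv 0$ does not suffice to run Morse, since one genuinely needs $P(\mathbf a)>0$ at an admissible weight, and a priori $P$ could be negative throughout $\mathfrak C$. The step that unlocks the argument is the observation that the nef bundle $\mathcal F_k(\mathbf a)$ forces $P\ge 0$ on the whole cone $\mathfrak C$; the nonvanishing hypothesis then upgrades nonnegativity to strict positivity on an open subcone, and Lemma~\ref{Zar} converts this real positivity into positivity at an integral weight. Everything else is the routine bookkeeping of transporting sections from $\mathcal O_{X_k}(\mathbf a)$ to $\mathcal O_{X_k}(1)$ via the tautological morphism.
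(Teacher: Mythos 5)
Your proof is correct and is essentially the paper's own argument: the same ingredients appear --- Lemma~\ref{Zar} for integral weights in the cone of condition~(\ref{relnef}), nefness of $\mathcal F_k(\mathbf a)$ to fix the sign of the leading coefficient $P$, Proposition~\ref{nef} to transfer that sign to the Morse intersection, Trapani's criterion, and the tautological morphism into $\mathcal O_{X_k}(m|\mathbf a|)$ --- only in a slightly permuted order, since the paper first picks an integral $\mathbf a'$ in the cone with $P(\mathbf a')\ne 0$ (Zariski density applied to the nonzero polynomial $P$) and then invokes nefness at that single point to get $P(\mathbf a')>0$, rather than establishing positivity on the whole cone. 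One small repair: integral points are \emph{not} topologically dense in $\mathfrak C$, so your step ``$P\ge 0$ at integral points, hence on all of $\mathfrak C$ by density and continuity'' should either pass through rational points of $\mathfrak C$ (which are dense) using the homogeneity of $P$ and the fact that $\mathfrak C$ is stable under positive scaling, or be bypassed altogether by adopting the paper's ordering, which needs the sign of $P$ only at the one integral point produced by Lemma~\ref{Zar}.
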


\begin{proof}
The real $k$-tuples which satisfy condition (\ref{relnef}), form a cone with non-empty interior in $\mathbb R^k$.
Thus, by Lemma \ref{Zar}, there exists an integral $\mathbf a'$ satisfying condition (\ref{relnef}) and such that $\mathcal O_{X_k}(\mathbf a')^{n+k(n-1)}$ has degree exactly $n+1$ in $d$. For reasons similar to those in the proof of Proposition \ref{nef}, the coefficient of degree $n+1$ in $d$ of $\mathcal O_{X_k}(\mathbf a')^{n+k(n-1)}$ and $\bigl(\mathcal O_{X_k}(\mathbf a')\otimes\pi_{0,k}^*\mathcal O_X(2|\mathbf a'|)\bigr)^{n+k(n-1)}$ are the same; the second one being nef, this coefficient must be positive.

Now, by Proposition \ref{nef}, this coefficient is the same as the coefficient of degree $n+1$ in $d$ of
$$
\begin{aligned}
&\bigl(\mathcal O_{X_k}(\mathbf a')\otimes\pi_{0,k}^*\mathcal O_X(2|\mathbf a'|)\bigr)^{n+k(n-1)} \\
& -[n+k(n-1)]\bigl(\mathcal O_{X_k}(\mathbf a')\otimes\pi_{0,k}^*\mathcal O_X(2|\mathbf a'|)\bigr)^{n+k(n-1)-1}\cdot\pi_{0,k}^*\mathcal O_X(2|\mathbf a'|).
\end{aligned}
$$
But then this last quantity is positive for $d$ large enough, and the Corollary follows by an application of algebraic holomorphic Morse inequalities.
\end{proof}

\begin{corollary}\label{bigness2}
For $k<n$, the coefficient of $d^{n+1}$ in the expression of 
$$
\mathcal O_{X_k}(\mathbf a)^{n+k(n-1)}
$$ 
is identically zero.
\end{corollary}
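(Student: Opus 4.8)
The plan is to argue by contradiction, playing the positivity furnished by Corollary \ref{bigness1} against the vanishing Theorem \ref{vanish}. Suppose that for some $k<n$ the coefficient of $d^{n+1}$ in $\mathcal{O}_{X_k}(\mathbf{a})^{n+k(n-1)}$ is \emph{not} identically zero as a polynomial in $a_1,\dots,a_k$. By Proposition \ref{nef} this coefficient is homogeneous of degree $n+k(n-1)$, and being a nonzero polynomial it cannot vanish on any Zariski-dense subset of $\mathbb{R}^k$: indeed, a polynomial vanishing on a Zariski-dense set vanishes on its closure, hence identically.

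Next I would locate an admissible integral weight. The real $k$-tuples satisfying condition (\ref{relnef}) form a cone $\mathfrak{C}\subset\mathbb{R}^k$ with nonempty interior, so Lemma \ref{Zar} guarantees that $\mathbb{Z}^k\cap\mathfrak{C}$ is Zariski dense. Since the leading coefficient is a nonzero polynomial, it cannot vanish identically on $\mathbb{Z}^k\cap\mathfrak{C}$; hence there is an integral $\mathbf{a}$ satisfying (\ref{relnef}) for which $\mathcal{O}_{X_k}(\mathbf{a})^{n+k(n-1)}$ has degree exactly $n+1$ in $d$. This is precisely the hypothesis of Corollary \ref{bigness1}.

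Applying Corollary \ref{bigness1}, which holds for \emph{every} line bundle $A\to X$, I would choose $A=\mathcal{O}_X(1)$, the hyperplane bundle, which is both ample and \emph{effective}. We then obtain $H^0\bigl(X_k,\mathcal{O}_{X_k}(m)\otimes\pi_{0,k}^*\mathcal{O}_X(-1)\bigr)\ne 0$ for $d$ and $m$ large. By the direct image formula of Theorem \ref{di} together with the projection formula, this group is isomorphic to $H^0\bigl(X,E_{k,m}T^*_X\otimes\mathcal{O}_X(-1)\bigr)$, which is therefore nonzero for suitable $m\ge 1$.

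Finally I would deduce the contradiction. Since $\mathcal{O}_X(1)$ is effective, multiplication by a nonzero section (a defining section of a hyperplane divisor) embeds $E_{k,m}T^*_X\otimes\mathcal{O}_X(-1)\hookrightarrow E_{k,m}T^*_X$, whence $H^0\bigl(X,E_{k,m}T^*_X\bigr)\ne 0$ for some $m\ge 1$. But $k<n$ means $1\le k\le n-1$, so Theorem \ref{vanish} forces $H^0(X,E_{k,m}T^*_X)=0$ for all $m\ge 1$, a contradiction. Hence the coefficient of $d^{n+1}$ must vanish identically. The one point demanding care is the passage from the \emph{twisted} space of sections, where the Morse inequalities naturally produce sections, to the \emph{untwisted} space governed by Theorem \ref{vanish}; exploiting the freedom to take $A$ effective in Corollary \ref{bigness1} is exactly what makes this passage clean.
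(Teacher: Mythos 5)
Your proposal is correct and follows essentially the same route as the paper: the paper's proof is exactly this contradiction, playing Corollary \ref{bigness1} against the vanishing Theorem \ref{vanish}. The only difference is your detour through $A=\mathcal O_X(1)$ followed by untwisting with a hyperplane section, which is unnecessary: since Corollary \ref{bigness1} holds for \emph{every} line bundle $A$, one may take $A=\mathcal O_X$ trivial and obtain sections of $\mathcal O_{X_k}(m)$ itself, i.e.\ of $E_{k,m}T^*_X$, contradicting Theorem \ref{vanish} directly.
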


\begin{proof}
Otherwise, we would have global sections of $\mathcal O_{X_k}(m)$ for $m$ large and $k<n$, which is impossible by Theorem \ref{vanish}.
\end{proof}

\subsection{Bigness of $\mathcal O_{X_n}(1)$} Thanks to the results of the previous subsection, to show the existence of a global section of $\mathcal O_{X_n}(m)\otimes\pi_{0,n}^*A^{-1}$ for $m$ and $d$ large, we just need to show that $\mathcal O_{X_n}(\mathbf a)^{n^2}$ has degree exactly $n+1$ in $d$ for some $n$-tuple $(a_1,\dots,a_n)$.

The multinomial theorem gives
$$
\begin{aligned}
(a_1\pi_{1,k}^*u_1&+\cdots+a_ku_k)^{n+k(n-1)}\\ &=\sum_{j_1+\dots+j_k=n+k(n-1)}\frac{(n+k(n-1))!}{j_1!\cdots j_k!}\,a_1^{j_1}\cdots a_k^{j_k}\, \pi_{1,k}^*u_1^{j_1}\cdots u_k^{j_k}.
\end{aligned}
$$
We need two lemmas.

\begin{lemma}\label{cruc1}
The coefficient of degree $n+1$ in $d$ of the two following intersections is zero:
\begin{itemize}
\item[$\bullet$] $\pi_{1,k}^*u_1^{j_1}\cdot\pi_{2,k}^*u_2^{j_2}\cdots u_k^{j_k}$ for all $1\le k\le n-1$ and $j_1+\dots+j_k=n+k(n-1)$
\item[$\bullet$] $\pi_{1,n-i-1}^*u_1^{j_1}\cdot\pi_{2,n-i-1}^*u_2^{j_2}\cdots u_{n-i-1}^{j_{n-i-1}}\cdot\pi_{0,n-i-1}^*c_1(X)^i$ for all $1\le i\le n-2$ and $j_1+\dots+j_{n-i-1}=(n-i-1)n+1$. 
\end{itemize}
\end{lemma}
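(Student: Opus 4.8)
The plan is to observe that both displayed expressions are top-degree intersection numbers---on $X_k$ in the first case and on $X_{n-i-1}$ in the second, since in each case the cohomological degree of the product equals $\dim X_k=n+k(n-1)$ (resp.\ $\dim X_{n-i-1}$). By the substitutions $c_s(X)=(-1)^sd^sh^s+o(d^s)$ and $h^n=d$, each such number is a polynomial in $d$, and I must show its coefficient of $d^{n+1}$ vanishes; equivalently, that the whole number has degree at most $n$ in $d$. I would compute these numbers by iterated push-forward down the tower $X_k\to X_{k-1}\to\cdots\to X$. Each $\pi_j\colon X_j\to X_{j-1}$ is the projective bundle $P(V_{j-1})$ of a rank-$n$ bundle, so by the relation (\ref{chern3}) one has $(\pi_j)_*(u_j^{n-1+m})=s_m(V_{j-1})$, the Segre class with $s(V_{j-1})=c(V_{j-1})^{-1}$, while lower powers of $u_j$ push forward to zero.

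The key structural input---and the only place where the hypersurface hypothesis enters---is that \emph{every Segre class $s_l(V_j)$ has degree at most one in $d$} (for $l\ge 1$). This holds at the base of the tower because $X$ is a hypersurface: $s(T_X)=c(T_X)^{-1}=(1+dh)/(1+h)^{n+2}$ is \emph{linear} in $d$, so $s_l(T_X)$ has $d$-degree $\le 1$. I would then propagate this up the tower using (\ref{chern1}), which gives $s(V_j)=s(\mathcal O_{X_j}(-1))\cdot s(\pi_j^*V_{j-1}\otimes\mathcal O_{X_j}(1))$. The first factor $s(\mathcal O_{X_j}(-1))=\sum_{a\ge0}u_j^a$ carries no $d$, and the universal formula expressing $s_l(E\otimes L)$ as a combination of $s_m(E)\,c_1(L)^{l-m}$ with $m\le l$ shows that twisting by $\mathcal O_{X_j}(1)$ only multiplies Segre classes of $V_{j-1}$ by $d$-free powers of $u_j$. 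Hence $\deg_d s_l(V_j)\le\max_{m\le l}\deg_d s_m(V_{j-1})\le1$ by induction.

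With this in hand the counting is immediate. Running the push-forward, each level contributes exactly one Segre factor of $d$-degree $\le1$: tracking the $d$-degree $q(j)$ of the class that lives on $X_j$ gives $q(j)\le q(j+1)+1$ with $q(k)=0$, so the class reaching $X$ is $R(d)\,h^n$ with $\deg_d R\le k$, and $\int_X h^n=d$ yields a number of $d$-degree $\le k+1$. For the first bullet, $k\le n-1$ gives $\le n$, killing the $d^{n+1}$ term. (Alternatively, the first bullet follows at once from Corollary \ref{bigness2}: expanding $\mathcal O_{X_k}(\mathbf a)^{n+k(n-1)}$ by the multinomial theorem and comparing the coefficients of the independent monomials $a_1^{j_1}\cdots a_k^{j_k}$ forces each $\pi_{1,k}^*u_1^{j_1}\cdots u_k^{j_k}$ to have vanishing $d^{n+1}$ coefficient.) For the second bullet I would treat the pulled-back factor $c_1(X)^i=(n+2-d)^ih^i$, of $d$-degree $i$, as a constant multiplier throughout the projection-formula computation; with $k=n-i-1$ Segre stages the total $d$-degree is at most $(n-i-1)+i+1=n$, again killing the $d^{n+1}$ term.

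I expect the main obstacle to be the structural lemma of the second paragraph, namely that the \lq\lq $d$-linearity\rq\rq{} of the Segre classes of $T_X$ survives every step of the jet-tower construction; this is exactly what encodes the codimension-one hypothesis and what makes the bound sharp at $k=n$ (so that the $d^{n+1}$ term can survive only at the top level, as is needed elsewhere). Everything else is bookkeeping, which I would organize through the clean induction on $\deg_d s_l(V_j)$ above rather than by expanding the relations (\ref{chern2}) and (\ref{chern3}) term by term.
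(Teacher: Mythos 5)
Your proposal is correct, but it follows a genuinely different route from the paper's. The paper proves the first bullet \emph{indirectly}: by Corollary \ref{bigness2} --- which itself rests on the vanishing theorem (Theorem \ref{vanish}, from \cite{Div08}) combined with the Morse-inequality argument of Corollary \ref{bigness1} --- the coefficient of $d^{n+1}$ in $\mathcal O_{X_k}(\mathbf a)^{n+k(n-1)}$ vanishes identically as a polynomial in $a_1,\dots,a_k$ when $k<n$; since the multinomial expansion exhibits each intersection $\pi_{1,k}^*u_1^{j_1}\cdots u_k^{j_k}$ as the coefficient of a distinct monomial $a_1^{j_1}\cdots a_k^{j_k}$, each must separately have vanishing $d^{n+1}$-coefficient (this is exactly the alternative you mention parenthetically). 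The second bullet is then handled by induction on $i$, manipulating the relations (\ref{chern3}) and (\ref{c1}) term by term, killing higher Chern classes by dimension reasons, and integrating along fibers so as to reduce to the first bullet. Your argument is instead a direct computation: iterated push-forward down the tower, with the key structural input that every Segre class $s_l(V_j)$ is \emph{affine in $d$}, propagated up from $s(T_X)=(1+dh)/(1+h)^{n+2}$ via (\ref{chern1}), the $d$-free factor $s(\mathcal O_{X_j}(-1))$, and the universal twist formula expressing $s_l(E\otimes L)$ as a $\mathbb Z$-combination of $s_m(E)c_1(L)^{l-m}$, $m\le l$ (which indeed holds, since $\mathcal O_{P(E\otimes L)}(1)=\mathcal O_{P(E)}(1)\otimes\pi^*L^{-1}$); each of the $k$ push-forward stages then raises the $d$-degree by at most one, the pulled-back factor $c_1(X)^i$ contributes $i$, and $\int_X h^n=d$ contributes one more, giving total degree at most $k+1\le n$, respectively $(n-i-1)+i+1=n$. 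I checked the degree bookkeeping, the dimension counts, and the Segre-class induction: they are sound, and the fact that you may work with non-reduced polynomial representatives is harmless because push-forward is well defined and computed termwise by the projection formula. What your approach buys: it is self-contained intersection theory, it treats both bullets uniformly, it isolates precisely where the hypersurface hypothesis enters (linearity of $s(T_X)$ in $d$) and why $k=n$ is the threshold, and --- notably --- it frees this lemma from any reliance on the vanishing theorem of \cite{Div08}, which in the paper is the sole purpose of Corollary \ref{bigness2}. What the paper's route buys: with the Morse-inequality and vanishing machinery already in place, it is shorter, never needs Segre classes or push-forward formalism, and the linear-independence trick in the $a_j$'s does for free what your degree propagation accomplishes by hand.
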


\begin{proof}
The first statement is straightforward. By Corollary \ref{bigness2}, we know that the coefficient of degree $n+1$ in $d$ (once the expression is reduced in term of the degree of the hypersurface) of  $(a_1\pi_{1,k}^*u_1+\cdots+a_ku_k)^{n+k(n-1)}$ must be identically zero for $k<n$. If this first part of the lemma fails to be true for some $(j_1,\dots,j_k)$, then this leading coefficient would contain al least a monomial, namely $a_1^{j_1}\cdots a_k^{j_k}$ and thus it would not be identically zero.



For the second statement we proceed by induction on $i$. Let us start with $i=1$. By the first part of the present lemma, we have that 
$$
\pi_{1,n-1}^*u_1^{j_1}\cdot\pi_{2,n-1}^*u_2^{j_2}\cdots\pi_{n-1}^*u_{n-2}^{j_{n-2}}\cdot u_{n-1}^n=o(d^{n+1}).
$$ 
On the other hand, relation (\ref{chern3}) gives
$$
\begin{aligned}
\pi_{1,n-1}^*&u_1^{j_1}\cdot\pi_{2,n-1}^*u_2^{j_2}\cdots\pi_{n-1}^*u_{n-2}^{j_{n-2}}\cdot u_{n-1}^n \\
&=\pi_{1,n-1}^*u_1^{j_1}\cdot\pi_{2,n-1}^*u_2^{j_2}\cdots\pi_{n-1}^*u_{n-2}^{j_{n-2}}\\
&\quad \cdot\bigl(-\pi_{n-1}^*c_1^{[n-2]}\cdot u_{n-1}^{n-1}-\cdots-\pi_{n-1}^*c_{n-1}^{[n-2]}\cdot u_{n-1}-\pi_{n-1}^*c_{n}^{[n-2]}\bigr) \\
&=-\pi_{1,n-1}^*u_1^{j_1}\cdot\pi_{2,n-1}^*u_2^{j_2}\cdots\pi_{n-1}^*u_{n-2}^{j_{n-2}}\cdot\pi_{n-1}^*c_1^{[n-2]}\cdot u_{n-1}^{n-1}
\end{aligned}
$$
and the second equality is true for degree reasons: 
$$
u_1^{j_1}\cdot u_2^{j_2}\cdots u_{n-2}^{j_{n-2}}\cdot c_l^{[n-2]},\quad l=2,\dots,n,
$$ 
\lq\lq lives\rq\rq{} on $X_{n-2}$ and has total degree $n+(n-2)(n-1)-1+l$ which is strictly greater than $n+(n-2)(n-1)=\dim X_{n-2}$, so that $u_1^{j_1}\cdot u_2^{j_2}\cdots u_{n-2}^{j_{n-2}}\cdot c_l^{[n-2]}=0$. Now, we use relation (\ref{c1}) and obtain in this way
$$
\begin{aligned}
\pi_{1,n-1}^*u_1^{j_1}&\cdot\pi_{2,n-1}^*u_2^{j_2}\cdots\pi_{n-1}^*u_{n-2}^{j_{n-2}}\cdot u_{n-1}^n\\
&=-\pi_{1,n-1}^*u_1^{j_1}\cdot\pi_{2,n-1}^*u_2^{j_2}\cdots\pi_{n-1}^*u_{n-2}^{j_{n-2}}\cdot u_{n-1}^{n-1}\\
&\quad\cdot\biggl(\pi_{0,n-1}^*c_1(X)+(n-1)\sum_{s=1}^{n-2}\pi_{s,n-1}^*u_s\biggr)\\
&=-\pi_{1,n-1}^*u_1^{j_1}\cdot\pi_{2,n-1}^*u_2^{j_2}\cdots\pi_{n-1}^*u_{n-2}^{j_{n-2}}\cdot u_{n-1}^{n-1}\cdot\pi_{0,n-1}^*c_1(X)\\
&\quad-(n-1)u_{n-1}^{n-1}\cdot\sum_{s=1}^{n-2}\pi_{1,n-1}^*u_1^{j_1}\cdots\pi_{s,n-1}^*u_s^{j_s+1}\cdots\pi_{n-1}^*u_{n-2}^{j_{n-2}}.
\end{aligned}
$$ 
An integration along the fibers of $X_{n-1}\to X_{n-2}$ then gives
$$
\begin{aligned}
\pi_{1,n-2}^*u_1^{j_1}&\cdot\pi_{2,n-2}^*u_2^{j_2}\cdots u_{n-2}^{j_{n-2}}\cdot\pi_{0,n-2}^*c_1(X)\\
&=-(n-1)\cdot\sum_{s=1}^{n-2}\underbrace{\pi_{1,n-2}^*u_1^{j_1}\cdots\pi_{s,n-2}^*u_s^{j_s+1}\cdots u_{n-2}^{j_{n-2}}}_{=o(d^{n+1})\,\,\text{\rm by the first part of the lemma}}\\
&\quad\quad+o(d^{n+1})
\end{aligned}
$$
and so $\pi_{1,n-2}^*u_1^{j_1}\cdot\pi_{2,n-2}^*u_2^{j_2}\cdots u_{n-2}^{j_{n-2}}\cdot\pi_{0,n-2}^*c_1(X)=o(d^{n+1})$.

To complete the proof, observe that -- as before -- relations (\ref{chern3}) and (\ref{c1}) together with a completely similar degree argument give 
$$
\begin{aligned}
\pi_{1,n-i}^*&u_1^{j_1}\cdot\pi_{2,n-i}^*u_2^{j_2}\cdots u_{n-i}^{j_{n-i-1}}\cdot\pi_{0,n-i}^*c_1(X)^i\cdot u_{n-i}^n \\
&=-\pi_{1,n-i}^*u_1^{j_1}\cdot\pi_{2,n-i}^*u_2^{j_2}\cdots\pi_{n-i}^*u_{n-i-1}^{j_{n-i-1}}\cdot u_{n-i}^{n-1}\cdot\pi_{0,n-i}^*c_1(X)^{i+1}\\
&\quad-(n-1)u_{n-i}^{n-1}\cdot\sum_{s=1}^{n-i-1}\pi_{1,n-i}^*u_1^{j_1}\cdots\pi_{s,n-i}^*u_s^{j_s+1}\cdots\pi_{n-i}^*u_{n-i-1}^{j_{n-i-1}}.
\end{aligned}
$$
But 
$$
\pi_{1,n-i}^*u_1^{j_1}\cdot\pi_{2,n-i}^*u_2^{j_2}\cdots u_{n-i}^{j_{n-i-1}}\cdot\pi_{0,n-i}^*c_1(X)^i\cdot u_{n-i}^n=o(d^{n+1})
$$ 
by induction, and 
$$
\pi_{1,n-i}^*u_1^{j_1}\cdots\pi_{s,n-i}^*u_s^{j_s+1}\cdots\pi_{n-i}^*u_{n-i-1}^{j_{n-i-1}}=o(d^{n+1}),
$$ 
$1\le s\le n-i-1$, thanks to the first part of the lemma.
\end{proof}

\begin{lemma}\label{cruc2}
The coefficient of degree $n+1$ in $d$ of $\pi_{1,n}^*u_1^n\cdot\pi_{2,n}^*u_2^n\cdots u_n^n$ 
is the same of the one of $(-1)^nc_1(X)^n$, that is $1$.
\end{lemma}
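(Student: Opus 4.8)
The plan is to compute the intersection number $\int_{X_n}\pi_{1,n}^*u_1^n\cdots u_n^n$ by integrating it down the tower $X_n\to X_{n-1}\to\cdots\to X_1\to X$ one fiber at a time, working throughout modulo classes whose coefficient of $d^{n+1}$ vanishes (I write $\alpha\equiv\beta$ for equality of these coefficients). Each $\pi_j\colon X_j\to X_{j-1}$ is a $\mathbb P^{n-1}$-bundle (here $\rank V=n$), so the Gysin morphism satisfies $(\pi_j)_*u_j^{\,n-1}=1$ and $(\pi_j)_*u_j^{\,l}=0$ for $l<n-1$. Feeding the defining relation (\ref{chern3}), namely $u_j^{\,n}=-\pi_j^*c_1^{[j-1]}u_j^{\,n-1}-\cdots-\pi_j^*c_n^{[j-1]}$, into the Gysin map gives the single reduction I will use repeatedly:
\[
(\pi_j)_*u_j^{\,n}=-c_1^{[j-1]}.
\]

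For the induction I set $I_i=\int_{X_{n-i}}\pi_{1,n-i}^*u_1^n\cdots\pi_{n-i-1,n-i}^*u_{n-i-1}^n\cdot u_{n-i}^n\cdot\pi_{0,n-i}^*c_1(X)^i$, so that $I_0$ is exactly the quantity to be evaluated. To pass from $I_i$ to $I_{i+1}$ I would integrate out the top variable $u_{n-i}$: all other factors are pullbacks from $X_{n-i-1}$ and come out of $(\pi_{n-i})_*$, leaving $I_i=-\int_{X_{n-i-1}}\pi_{1,n-i-1}^*u_1^n\cdots u_{n-i-1}^n\cdot c_1^{[n-i-1]}\cdot\pi_{0,n-i-1}^*c_1(X)^i$. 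I then substitute the simple expression (\ref{c1}) for $c_1^{[n-i-1]}$: its $\pi_{0,n-i-1}^*c_1(X)$ summand produces exactly $-I_{i+1}$, while each of its $(n-1)\,\pi_{s,n-i-1}^*u_s$ summands produces a monomial $\pi_{1,n-i-1}^*u_1^{j_1}\cdots u_{n-i-1}^{j_{n-i-1}}\cdot c_1(X)^i$ with $\sum_s j_s=(n-i-1)n+1$.

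The point of the whole argument is that these leftover monomials are precisely the ones shown to be $o(d^{n+1})$ in Lemma \ref{cruc1}: the first bullet of that lemma covers the step $i=0$ and its second bullet covers the steps $1\le i\le n-2$, so in every case the $u_s$-summands drop out and I obtain the clean recursion $I_i\equiv -I_{i+1}$. Iterating gives $I_0\equiv(-1)^{n-1}I_{n-1}$. The final step, $i=n-1$, is special and exact: on $X_1$ one has $c_1^{[0]}=c_1(X)$, so no splitting is needed and $(\pi_1)_*u_1^n=-c_1(X)$ yields $I_{n-1}=-\int_X c_1(X)^n$. Hence $\int_{X_n}\pi_{1,n}^*u_1^n\cdots u_n^n\equiv(-1)^n\int_X c_1(X)^n$, and since $c_1(X)^n=((-1)^nd^n+o(d^n))\,h^n$ with $h^n=d$ one gets $\int_X c_1(X)^n=(-1)^nd^{n+1}+o(d^{n+1})$; the coefficient of $d^{n+1}$ is therefore $1$, equal to that of $(-1)^nc_1(X)^n$.

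The main obstacle, and the only real content beyond the routine Gysin computations, is the bookkeeping that guarantees at each level that the discarded $u_s$-terms fall exactly within the scope of one of the two bullets of Lemma \ref{cruc1} (matching both the ambient stage $X_{n-i-1}$ and the degree constraint $\sum_s j_s=(n-i-1)n+1$), together with correctly tracking the accumulating sign $(-1)^i$ through the recursion. Once one checks that the two ranges of Lemma \ref{cruc1} tile the values $0\le i\le n-1$ with the endpoint $i=n-1$ handled separately and exactly, the result follows.
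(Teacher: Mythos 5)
Your proposal is correct and follows essentially the same route as the paper's proof: both iterate relation (\ref{chern3}) together with formula (\ref{c1}) to trade each top power $u_j^n$ for a factor $-c_1(X)$, discarding exactly the terms covered by the first bullet of Lemma \ref{cruc1} at the first step and by its second bullet at the subsequent ones, until only $(-1)^n c_1(X)^n$ survives. The only difference is organizational — you push forward fiber-by-fiber via the Gysin identity $(\pi_j)_*u_j^n=-c_1^{[j-1]}$ (which also absorbs the paper's ``degree reasons'' step killing the $c_l^{[j-1]}$, $l\ge 2$, terms), whereas the paper manipulates classes on $X_n$ and integrates along all fibers at the very end.
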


\begin{proof}
An explicit computation yields:
$$
\begin{aligned}
\pi_{1,n}^*u_1^{n}\cdot \pi_{2,n}^*u_2^n\cdots u_n^{n}&\overset{(i)}= \pi_{1,n}^*u_1^{n}\cdot \pi_{2,n}^*u_2^n\cdots \pi_{n}^*u_{n-1}^{n}\bigl(-\pi_n^*c_1^{[n-1]}\cdot u_n^{n-1} \\ &\quad\quad-\cdots-\pi_n^*c_{n-1}^{[n-1]}\cdot u_n-\pi_n^*c_{n}^{[n-1]}\bigr) \\
&\overset{(ii)}=-\pi_{1,n}^*u_1^{n}\cdot \pi_{2,n}^*u_2^n\cdots\pi_{n}^* u_{n-1}^{n}\cdot u_n^{n-1} \cdot \pi_n^*c_1^{[n-1]} \\
&\overset{(iii)}=-\pi_{1,n}^*u_1^{n}\cdot \pi_{2,n}^*u_2^n\cdots \pi_{n}^*u_{n-1}^{n}\cdot u_n^{n-1} \\ &\quad\quad\cdot\pi_n^*\biggl(\pi_{0,n-1}^*c_1(X)+(n-1)\sum_{s=1}^{n-1}\pi_{s,n-1}^*u_s\biggr) \\
&\overset{(iv)}=-\pi_{1,n}^*u_1^{n}\cdot \pi_{2,n}^*u_2^n\cdots \pi_{n}^*u_{n-1}^{n}\cdot u_n^{n-1}\cdot\pi_{0,n}^*c_1(X) \\ &\quad\quad+o(d^{n+1})\\
&=\cdots \\
&\overset{(v)}=(-1)^n\pi_{0,k}^*c_1(X)^n\cdot\pi_{1,k}^*u_1^{n-1}\cdots u_n^{n-1}+o(d^{n+1}) \\
&\overset{(vi)}=(-1)^nc_1(X)^n+o(d^{n+1}).
\end{aligned}
$$
Let us say a few words about the previous equalities. Equality (i) is just relation (\ref{chern3}). Equality (ii) is true for degree reasons: $u_1^{n}\cdot u_2^n\cdots u_{n-1}^{n}\cdot c_l^{[n-1]}$, $l=2,\dots,n$, \lq\lq lives\rq\rq{} on $X_{n-1}$ and has total degree $n(n-1)+l$ which is strictly greater than $n+(n-1)(n-1)=\dim X_{n-1}$, so that $u_1^{n}\cdot u_2^n\cdots u_{n-1}^{n}\cdot c_l^{[n-1]}=0$. Equality (iii) is just relation (\ref{c1}). Equality (iv) follows from the first part of Lemma \ref{cruc1}: $u_1^n\cdots u_s^{n+1}\cdots u_{n-1}^n=o(d^{n+1})$. Equality (v) is obtained by applying repeatedly the second part of Lemma \ref{cruc1}. Finally, equality (vi) is simply integration along the fibers. The lemma is proved.
\end{proof}

Now, look at the coefficient of degree $n+1$ in $d$ of the expression
$$
\mathcal O_{X_n}(\mathbf a)^{n^2}=\bigl(a_1\pi_{1,n}^*u_1+\cdots+a_n u_n\bigr)^{n^2},
$$
where we consider the $a_j$'s as variables: we claim that it is a non identically zero homogeneous polynomial of degree $n^2$. To see this, we just observe that, thanks to Lemma \ref{cruc2}, the coefficient of the monomial $a_1^n\cdots a_n^n$ is $(n^2)!/(n!)^n$.

Hence there exists an $\mathbf a$ which satisfies the hypothesis of Corollary \ref{bigness1} for $k=n$, and Theorem \ref{existence} is proved.

\section{The logarithmic case and proof of Theorem \ref{logexistence}}

Let $D\subset X$ be a simple normal crossing divisor in a compact complex manifold $X$, \emph{i.e.} for each $x\in X$ there exist local holomorphic coordinates $(z_1,\dots,z_n)$ for $X$, centered at $x$, such that locally $D=\{z_1\cdots z_l=0\}$, $0\le l\le n$. Then $T^*_X\langle D\rangle$, the logarithmic cotangent space to $X$ relative to $D$, is well defined and locally free: it is the subsheaf of the sheaf of meromorphic differential forms of the form
$$
\sum_{j=1}^lf_j\,\frac{dz_j}{z_j}+\sum_{k=l+1}^n f_k\,dz_k,
$$
where $f_i\in\mathcal O_{X,x}$ are germs of holomorphic functions in $x$ and the local coordinates are chosen as above. Clearly, we have the following short exact sequence
\begin{equation}\label{log}
0\to T^*_X\to T^*_X\langle D\rangle\to\mathcal O_D\to 0
\end{equation}
and also $T^*_X\langle D\rangle |_{X\setminus D}=T^*_X$.

\subsection{Chern classes computations}

Here, we compute Chern classes for the logarithmic (co)tangent bundle of the pair $(\mathbb P^n,D)$, when $D$ is a smooth projective hypersurface of degree $\deg D=d$. In this case (\ref{log}) become
$$
\xymatrix{
& & & 0 &  \\
0\ar[r] & T^*_{\mathbb P^n}\ar[r] & T^*_{\mathbb P^n}\langle D\rangle\ar[r] & \mathcal O_D\ar[r] \ar[u]& 0, \\
& & & \mathcal O_{\mathbb P^n} \ar[u]& \\
& & & \mathcal O_{\mathbb P^n}(-D) \ar[u] \\
& & & 0 \ar[u] &}
$$
where the vertical arrows are the usual locally free resolution of the structure sheaf of a divisor in $\mathbb P^n$; then
$$
c_{\bullet}(T^*_{\mathbb P^n}\langle D\rangle)=c_{\bullet}(T^*_{\mathbb P^n})c_\bullet(\mathcal O_D)\quad\text{\rm and}\quad 1=c_\bullet(\mathcal O_{\mathbb P^n}(-D))c_\bullet(\mathcal O_D),
$$
so that if $h\in H^2(\mathbb P^n,\mathbb Z)$ is the hyperplane class, we have
$$
1=(1-d\, h)c_\bullet(\mathcal O_D)
$$
and thus $c_\bullet(\mathcal O_D)=1+d\,h+(d\,h)^2+\cdots+(d\,h)^n$. Now, recalling that $c_\bullet(T_{\mathbb P^n})=(1+h)^{n+1}$ and that, for a vector bundle $E$,  $c_j(E^*)=(-1)^jc_j(E)$,
we get the following:

\begin{proposition}
Let $D\subset\mathbb P^n$ be a smooth hypersurface of degree $\deg D=d$. Then the Chern classes of the logarithmic tangent bundle $T_{\mathbb P^n}\langle D\rangle$ are given by
\begin{equation}\label{chernlog}
c_j(T_{\mathbb P^n}\langle D\rangle)=(-1)^jh^j\sum_{k=0}^j(-1)^k\binom{n+1}{k}d^{j-k},
\end{equation}
for $j=1,\dots,n$.
\end{proposition}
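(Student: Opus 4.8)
The plan is to carry out the Chern class computation directly from the two multiplicative relations furnished by the resolution diagram, reducing everything to a coefficient extraction in the truncated cohomology ring $H^\bullet(\mathbb P^n,\mathbb Z)=\mathbb Z[h]/(h^{n+1})$.

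First I would pass from the logarithmic tangent bundle to its dual, using $c_j(T_{\mathbb P^n}\langle D\rangle)=(-1)^jc_j(T^*_{\mathbb P^n}\langle D\rangle)$, so that it suffices to compute the total Chern class of $T^*_{\mathbb P^n}\langle D\rangle$. By the diagram this factors as $c_\bullet(T^*_{\mathbb P^n}\langle D\rangle)=c_\bullet(T^*_{\mathbb P^n})\,c_\bullet(\mathcal O_D)$. The first factor is $c_\bullet(T^*_{\mathbb P^n})=(1-h)^{n+1}$, obtained from $c_\bullet(T_{\mathbb P^n})=(1+h)^{n+1}$ by dualizing term by term. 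For the second factor I would use the expression already recorded above, namely $c_\bullet(\mathcal O_D)=1+dh+(dh)^2+\cdots+(dh)^n$.

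The observation that makes the bookkeeping clean is that, since $h^{n+1}=0$ in $H^\bullet(\mathbb P^n,\mathbb Z)$, the finite sum $\sum_{i=0}^n(dh)^i$ is exactly the inverse of $1-dh$ in the truncated ring: indeed $(1-dh)\sum_{i=0}^n(dh)^i=1-(dh)^{n+1}=1$. Hence, formally, $c_\bullet(T^*_{\mathbb P^n}\langle D\rangle)=(1-h)^{n+1}/(1-dh)$. Expanding $(1-h)^{n+1}=\sum_k(-1)^k\binom{n+1}{k}h^k$ together with $1/(1-dh)=\sum_m d^m h^m$ and taking the Cauchy product, the coefficient of $h^j$ is $\sum_{k=0}^j(-1)^k\binom{n+1}{k}d^{j-k}$, which gives $c_j(T^*_{\mathbb P^n}\langle D\rangle)=h^j\sum_{k=0}^j(-1)^k\binom{n+1}{k}d^{j-k}$. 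Reinstating the dualizing sign $(-1)^j$ then yields exactly the claimed formula (\ref{chernlog}).

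There is no genuine obstacle here: the statement is a bookkeeping identity once the two factorizations are in hand, and the whole content is the coefficient extraction above. The only points requiring a little care are the sign conventions under dualization (the overall factor $(-1)^j$) and the truncation at degree $n$, which is precisely what legitimizes replacing the polynomial $c_\bullet(\mathcal O_D)$ by the geometric series $1/(1-dh)$; since the formula is asserted only in the range $1\le j\le n$, no term of degree exceeding $n$ is ever needed and the truncation causes no loss.
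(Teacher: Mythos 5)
Your proposal is correct and follows essentially the same route as the paper: it uses the factorizations $c_\bullet(T^*_{\mathbb P^n}\langle D\rangle)=c_\bullet(T^*_{\mathbb P^n})\,c_\bullet(\mathcal O_D)$ and $1=(1-dh)\,c_\bullet(\mathcal O_D)$ coming from the two exact sequences in the resolution diagram, expands $(1-h)^{n+1}\sum_i(dh)^i$, and reinstates the sign $(-1)^j$ from dualization. The only cosmetic difference is that you phrase $c_\bullet(\mathcal O_D)$ as the inverse $1/(1-dh)$ in the truncated ring rather than as the explicit finite geometric sum, which is the same coefficient extraction.
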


\subsection{Logarithmic jet bundles}

Here, we recall the construction due to \cite{D-L01} of logarithmic jet bundles, which is in fact completely analogous to the \lq\lq standard\rq\rq{} one.

So, we start with a triple $(X,D,V)$ where $(X,V)$ is a compact directed manifold and $D\subset X$ is a simple normal crossing divisor whose components $D_{(j)}$ are everywhere transversal to $V$ (that is $T_{D_{(j)}}+V=T_X$ along $D_{(j)}$). Let, as usual, $\mathcal O(V\langle D\rangle)$ be the (locally free in this setting) sheaf of germs of holomorphic vector fields which are tangent to each component of $D$. 

Now, we define a sequence $(X_k,D_k,V_k)$ of logarithmic $k$-jet bundles as in Subsection 2.2: if $(X_0,D_0,V_0)=(X,D,V\langle D\rangle)$, set $X_k=P(V_{k-1})$, $D_k=\pi_{0,k}^{-1}D$ and $V_k$ is the set of logarithmic tangent vectors in $T_{X_k}\langle D_k\rangle$ which project onto the line defined by the tautological line bundle $\mathcal O_{X_{k}}(-1)\subset\pi_k^*V_{k-1}$.

In this case, the direct image formula of Theorem \ref{di} becomes
$$
(\pi_{0,k})_*\mathcal O_{X_k}(m)=\mathcal O(E_{k,m}V^*\langle D\rangle),
$$
where $\mathcal O(E_{k,m}V^*\langle D\rangle)$ is the sheaf generated by all invariant polynomial differential operators in the derivatives of order $1,2,\dots,k$ of the components $f_1,\dots,f_r$ of a germ of holomorphic curve $f\colon (\mathbb C,0)\to X\setminus D$ tangent to $V$, together with the extra functions $\log s_j(f)$ along the $j$-th components $D_{(j)}$ of $D$, where $s_j$ is a local equation for $D_{(j)}$. 

Then, as in the compact case, we have the following:

\begin{theorem}[\cite{D-L01}]
Assume that there exist integers $k,m>0$ and an ample line bundle $A\to X$ such that 
$$
H^0(X_k,\mathcal O_{X_k}(m)\otimes\pi_{0,k}^*A^{-1})\simeq H^0(X,E_{k,m}V^*\langle D\rangle\otimes A^{-1})
$$
has non zero sections $\sigma_1,\dots,\sigma_N$ and let $Z\subset X_k$ be the base locus of these sections. Then every entire holomorphic curve $f\colon\mathbb C\to X\setminus D$ tangent to $V$ is such that $f_{[k]}(\mathbb C)\subset Z$.
\end{theorem}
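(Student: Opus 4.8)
The plan is to reduce this statement to the now-standard \emph{fundamental vanishing theorem} for jet differentials, adapted to the logarithmic setting. The essential observation is that, via the logarithmic direct image formula just stated, a section $\sigma\in H^0(X_k,\mathcal O_{X_k}(m)\otimes\pi_{0,k}^*A^{-1})$ is nothing but a global $\mathbb G_k$-invariant logarithmic jet differential $P$ of order $k$ and weight $m$ whose coefficients are sections of $A^{-1}$; that is, an operator $P(f',\dots,f^{(k)};\log s_1(f),\dots)$ vanishing along the ample divisor associated to $A$. So the content to be proved is that every entire curve into $X\setminus D$ satisfies the corresponding differential equation.

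First I would set up the evaluation of $P$ along an entire curve $f\colon\mathbb C\to X\setminus D$ tangent to $V$. Since $f$ avoids $D$, each local equation $s_j(f)$ is a nowhere-vanishing holomorphic function on $\mathbb C$, so the logarithmic quantities $d\log s_j(f)=s_j(f)'/s_j(f)\,dt$ and all their higher derivatives are genuine holomorphic functions on $\mathbb C$: the logarithmic structure introduces no poles. Consequently the composite $P(f)$, i.e. the pullback $f_{[k]}^*\sigma$, is a holomorphic section over $\mathbb C$ of $f_{[k]}^*(\mathcal O_{X_k}(m)\otimes\pi_{0,k}^*A^{-1})$. Using the tautological identification of $f_{[k]}^*\mathcal O_{X_k}(-1)$ with the line generated by the lifted derivative of $f$, this reads as a holomorphic $\mathbb C$-valued function obtained by pairing the $m$-th power of the jet of $f$ against a section of $A^{-1}$.

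The crux is then to show $P(f)\equiv 0$. Here I would invoke the negativity coming from the ampleness of $A$: fixing a smooth metric on $A$ of positive curvature, the bundle $A^{-1}$ is negatively curved, and a Schwarz--Ahlfors type argument (as in Green--Griffiths and Demailly) shows that a holomorphic jet differential with values in such a negative bundle must vanish identically when evaluated along any entire curve, since otherwise one contradicts the parabolicity of $\mathbb C$ together with the growth restriction imposed by the negative curvature. The only genuinely logarithmic subtlety is to control the growth of the logarithmic derivatives along $f$; for the qualitative conclusion this is harmless once holomorphy has been secured as above, whereas a quantitative version would appeal to the logarithmic derivative lemma of Nevanlinna theory. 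This vanishing means precisely that $f_{[k]}(\mathbb C)$ lies in the zero divisor of $\sigma$.

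Finally, applying this to each of $\sigma_1,\dots,\sigma_N$ shows that $f_{[k]}(\mathbb C)$ is contained in the intersection of all their zero loci, which is by definition the base locus $Z$, yielding the claim. I expect the main obstacle to be the fundamental vanishing step: making the Schwarz-lemma/negative-curvature argument rigorous, and in the logarithmic case verifying that the log poles of $T^*_X\langle D\rangle$ are compensated exactly by the hypothesis that $f$ maps into $X\setminus D$, so that the same negativity argument carries over unchanged from the compact case.
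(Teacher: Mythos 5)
The paper offers no proof of this statement at all: it is quoted verbatim from Dethloff--Lu \cite{D-L01}, exactly as its compact analogue in Section~2 is quoted from \cite{G-G79} and \cite{Demailly95}, and both are used as black boxes. So there is no internal argument to compare yours against; the benchmark is the proof in the cited reference, and your sketch does follow that standard route. The genuinely logarithmic content is the part you handle correctly and explicitly: since $f(\mathbb C)\subset X\setminus D$, every local equation $s_j\circ f$ is a nowhere vanishing holomorphic function on $\mathbb C$, so the logarithmic terms entering $P(f)$ are holomorphic, and $f_{[k]}^*\sigma$, paired with the $m$-th power of $f_{[k-1]}'$ under the tautological identification, is a global holomorphic section of $f^*A^{-1}$. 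Your final deduction (vanishing of each $P_j(f)$ forces $f_{[k]}(\mathbb C)$ into the common zero locus $Z$) is also correct.

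The caveat concerns the step you yourself flag as the main obstacle: the negative-curvature core is invoked rather than proved, and in the naive form you describe it would not close. Ahlfors--Schwarz on $\mathbb C$ applied to the pseudometric $\gamma=\|P(f)\|_{h_A^{-1}}^{2/m}|dt|^2$ needs an estimate $\partial\bar\partial\log\gamma\ge\varepsilon\gamma$; since what holomorphy and ampleness give is $\partial\bar\partial\log\gamma\ge\frac1m f^*\Theta_{h_A}(A)\ge\varepsilon'|f'|_\omega^2\,|dt|^2$, closing the loop would require a pointwise bound $\|P(f',\dots,f^{(k)})\|^{2/m}\le C\,|f'|_\omega^2$, which is true for $k=1$ but false for $k\ge2$ (the higher derivatives $f'',\dots,f^{(k)}$ are not controlled by $f'$). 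The rigorous implementations are either (a) the $k$-jet metric formalism of \cite{Demailly95}, adapted to the logarithmic setting in \cite{D-L01}: the sections $\sigma_j$ induce a singular metric on $\mathcal O_{X_k}(-1)$, suitably corrected so as to have negative curvature in the directions of $V_k$, and one applies Ahlfors--Schwarz to $\gamma=\|f_{[k-1]}'\|^2$ in that metric, using crucially that the lift $f_{[k]}$ is tangent to $V_k$ and that $(\pi_k)_*f_{[k]}'=f_{[k-1]}'$; or (b) Nevanlinna theory via the lemma on logarithmic derivatives, which you mention as the quantitative variant. As long as that compact-type core is cited (as the paper itself cites the whole theorem), your outline is a correct proof; presented as self-contained, the Schwarz step as written is a genuine gap.
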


Just like in the compact case, the locally free sheaf $\mathcal O(E_{k,m}V^*\langle D\rangle)$ arises naturally as a subsheaf of $\mathcal J_{k,m}\langle D\rangle$, of (non necessarily invariant) polynomial differential operators (cf. \cite{D-L01}, \cite{Demailly95}). Moreover, we can endow $\mathcal J_{k,m}V^*\langle D\rangle$ with a natural filtration with respect to the (weighted) degree such that the associated graded bundle is
$$
\operatorname{Gr}^\bullet\mathcal J_{k,m}V^*\langle D\rangle=\bigoplus_{\ell_1+2\ell_2+\cdots+k\ell_k=m}
S^{\ell_1}V^*\langle D\rangle\otimes\cdots\otimes S^{\ell_k}V^*\langle D\rangle.
$$

\subsection{Strategy of the proof and logarithmic case} 

We begin with the following simple observation: the above construction of logarithmic jet bundles is, from the \lq\lq relative\rq\rq{} point of view, exactly the same as in the compact case.
 
This means that the short exact sequences which determine the relations on Chern classes and thus the relative structure of the cohomology algebra are, in the logarithmic case, the same as in Subsection 2.2.

Recall the main points of the proof in the compact case:

\begin{itemize}
\item For $\dim X=n$, go up to the $n$-th projectivized jet bundle, and find a (class of) relatively nef line bundle $\mathcal O_{X_n}(\mathbf a)\to X_n$, with a nontrivial morphism into $\mathcal O_{X_n}(m)$ for some large $m$.
\item Write $\mathcal O_{X_n}(\mathbf a)$ as the difference of two globally nef line bundle, namely 
$$
\bigl(\mathcal O_{X_k}(\mathbf a)\otimes\pi_{0,k}^*\mathcal O_X(2|\mathbf a|)\bigr)\otimes\pi_{0,k}^*\mathcal O_X(-2|\mathbf a|).
$$
\item Compute the \lq\lq Morse\rq\rq{} intersection $F^{n}-nF^{n-1}\cdot G$ for $\mathcal O_{X_n}(\mathbf a)$ and show that, once expressed in term of the degree of $X$, the leading term is the same of $\mathcal O_{X_n}(\mathbf a)^{n^2}$.
\item Use the vanishing Theorem \ref{vanish} to conclude that the term of maximal possible degree in $\mathcal O_{X_k}(\mathbf a)^{n+k(n-1)}$ vanishes for $k<n$.
\item Find a particular non-vanishing monomial in the variables $\mathbf a$, in the expression on maximal possible degree of $\mathcal O_{X_n}(\mathbf a)^{n^2}$.
\end{itemize}

From this discussion, it follows that the only part which remains to be proved in the logarithmic case is an analogous of Theorem \ref{vanish}, all the rest being completely identical: this will be done in the next subsection. 

To conclude the present paragraph, we just observe that the starting point to write $\mathcal O_{X_n}(\mathbf a)$ as the difference of two globally nef line bundles is, for $X$ a smooth projective hypersurface,  that $T^*_X\otimes\mathcal O(2)$ is nef as a quotient of $T^*_{\mathbb P^{n+1}}\otimes\mathcal O(2)$. Thanks to the short exact sequence (\ref{log}), this is the true also in the logarithmic case:
$$
0\to T^*_{\mathbb P^n}\otimes\mathcal O(2)\to T^*_{\mathbb P^n}\langle X\rangle\otimes\mathcal O(2)\to\mathcal O_X(2)\to 0,
$$
and $T^*_{\mathbb P^n}\langle X\rangle\otimes\mathcal O(2)$ in nef as an extension of a nef vector bundle by a nef line bundle (compare with \cite{Div08}).
 
\subsection{Vanishing of global section of low order logarithmic jet differentials} 

We want to prove here the following vanishing theorem.

\begin{theorem}\label{logvanishing}
Let $D\subset\mathbb P^n$ be a smooth irreducible divisor of degree $\deg D=\delta$. Then
$$
H^0(\mathbb P^n,\mathcal J_{k,m}T^*_{\mathbb P^n}\langle D\rangle)=0
$$
for all $m\ge 1$ and $1\le k\le n-1$, provided $\delta\ge 3$.
\end{theorem}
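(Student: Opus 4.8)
The plan is to reduce the statement for the full, non-invariant sheaf $\mathcal J_{k,m}T^*_{\mathbb P^n}\langle D\rangle$ to a vanishing statement for the individual Schur powers of the logarithmic cotangent bundle, and then to establish the latter as a logarithmic counterpart of the Br\"uckmann--Rackwitz type vanishing that underlies Theorem \ref{vanish}. Write $W=T^*_{\mathbb P^n}\langle D\rangle$, a vector bundle of rank $n$. First I would use the weight filtration recalled above, whose graded bundle is
$$
\operatorname{Gr}^\bullet\mathcal J_{k,m}W=\bigoplus_{\ell_1+2\ell_2+\cdots+k\ell_k=m}S^{\ell_1}W\otimes\cdots\otimes S^{\ell_k}W .
$$
Since $H^0$ is left exact, a descending induction along this filtration shows that $H^0(\mathbb P^n,\mathcal J_{k,m}W)$ vanishes as soon as $H^0$ of every graded summand vanishes. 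Each summand is a tensor product of $k$ symmetric powers of $W$, hence by the Littlewood--Richardson (Pieri) rule it splits, in characteristic zero, as a direct sum of Schur bundles $\mathbb S^\lambda W$ in which every partition $\lambda$ has at most $k$ rows. As $1\le k\le n-1$, and $m\ge1$ forces $\lambda\neq\emptyset$, the whole theorem is reduced to the assertion that $H^0(\mathbb P^n,\mathbb S^\lambda W)=0$ for every nonempty $\lambda$ with at most $n-1$ rows.

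This reduction is exactly where the bound $k\le n-1$ must be used, and it is sharp for a structural reason: $\det W=K_{\mathbb P^n}\otimes\mathcal O(D)=\mathcal O(\delta-n-1)$ is ample for $\delta$ large, so Schur bundles attached to partitions with the full number $n$ of rows involve this ample determinant and \emph{do} acquire sections --- this is precisely the existence phenomenon of Theorem \ref{logexistence}. The restriction to at most $n-1$ rows is what prevents the determinant from entering. Note that the crude estimates obtained from the inclusions $\Omega^1_{\mathbb P^n}\subset W\subset\Omega^1_{\mathbb P^n}(D)$ lose far too much positivity to detect this, so a finer argument is unavoidable. I see two routes. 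In the first, I would produce an explicit locally free resolution of $\mathbb S^\lambda W$ out of a logarithmic Euler sequence for $W$ together with the structure sequence $0\to\mathcal O(-\delta)\to\mathcal O\to\mathcal O_D\to0$ coming from (\ref{log}), and then compute the relevant cohomology directly on $\mathbb P^n$ by Bott's formula; here the hypothesis $\delta\ge3$ would enter through the numerical conditions that force every twist occurring in the resolution into the acyclic range, the row bound $n-1$ keeping all these twists under control. In the second route I would pass to the $\delta$-fold cyclic cover $\pi\colon Y\to\mathbb P^n$ branched along $D$: an eigen-summand of the pushforward of the Schur powers of $\Omega^1_Y$ recovers $\mathbb S^\lambda W$, so that $H^0(\mathbb P^n,\mathbb S^\lambda W)$ injects into a piece of $H^0(Y,\mathbb S^\lambda\Omega^1_Y)$, and $Y$ is a (weighted) complete intersection to which the compact vanishing of Theorem \ref{vanish} applies once $\lambda$ has at most $n-1$ rows.

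The hard part is the Schur vanishing itself, namely $H^0(\mathbb P^n,\mathbb S^\lambda W)=0$ for $\lambda$ with at most $n-1$ rows; everything preceding it is formal, and, as the author stresses, the remainder of the logarithmic argument is word for word the compact one. In the first route the real work is the combinatorial bookkeeping of the Schur/Koszul filtration of $\mathbb S^\lambda W$ and the verification that every Bott vanishing condition holds simultaneously for all admissible $\lambda$; I expect $\delta\ge3$ to be exactly the threshold making these degree inequalities uniform, which also explains why no lower bound is needed beyond this mild one. Finally I would record the sharpness check --- that an $n$-row partition does produce nonzero $H^0$ --- both as a sanity test against Theorem \ref{logexistence} and to confirm that the bound $k\le n-1$ cannot be relaxed.
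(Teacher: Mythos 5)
Your reduction step coincides with the paper's: the weight filtration of $\mathcal J_{k,m}T^*_{\mathbb P^n}\langle D\rangle$, left-exactness of $H^0$ on the composition series, and Pieri's formula showing that a tensor product of $k\le n-1$ symmetric powers only produces Schur factors $\Gamma^{(\lambda)}T^*_{\mathbb P^n}\langle D\rangle$ with $\lambda_n=0$. The gap lies in the step you yourself identify as the hard one, the vanishing $H^0(\mathbb P^n,\Gamma^{(\lambda)}T^*_{\mathbb P^n}\langle D\rangle)=0$ for nonempty $\lambda$ with $\lambda_n=0$, which you leave as two unexecuted sketches. Your first route (resolutions plus Bott's formula) is only a programme: no resolution is exhibited and no numerical condition is checked. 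Your second route is in fact the route the paper takes --- the hypersurface $\tilde D\subset\mathbb P^{n+1}$ cut out by $z_{n+1}^\delta=P(z_0,\dots,z_n)$ is precisely the $\delta$-fold cyclic cover of $\mathbb P^n$ branched along $D$ --- but it contains a concrete error. Pullbacks of logarithmic forms are \emph{not} holomorphic forms on the cover: since $dP/P$ pulls back to $\delta\,dz_{n+1}/z_{n+1}$, a pulled-back logarithmic form acquires a simple pole along the ramification divisor $\{z_{n+1}=0\}\cap\tilde D$, which is a hyperplane section. The injection one actually obtains is therefore
$$
H^0(\mathbb P^n,\Gamma^{(\lambda)}T^*_{\mathbb P^n}\langle D\rangle)\hookrightarrow H^0\bigl(\tilde D,\Gamma^{(\lambda)}T^*_{\tilde D}\otimes\mathcal O_{\tilde D}(|\lambda|)\bigr),
$$
into a \emph{twisted} Schur power, not into $H^0(\tilde D,\mathbb S^\lambda\Omega^1_{\tilde D})$ as you claim. (Equivalently: the invariant eigen-summand of the pushforward of $\Omega^1_{\tilde D}$ is $\Omega^1_{\mathbb P^n}$, not $T^*_{\mathbb P^n}\langle D\rangle$; to see $\Gamma^{(\lambda)}T^*_{\mathbb P^n}\langle D\rangle$ as a summand you must push forward the \emph{logarithmic} Schur power upstairs, which only embeds into the twisted one.)

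This twist is not a technicality; it is where all the hypotheses live. An ample twist by $\mathcal O_{\tilde D}(|\lambda|)$ cannot be absorbed by any untwisted vanishing statement, and in any case Theorem \ref{vanish}, which you propose to invoke, concerns the invariant jet bundles $E_{k,m}T^*$ and not arbitrary Schur powers, so it is the wrong tool after your reduction. What is needed is the Br\"uckmann--Rackwitz vanishing theorem for twisted Schur powers on hypersurfaces (Theorem \ref{vanishtw} in the paper): $H^0(Y,\Gamma^{(\lambda)}T^*_Y\otimes\mathcal O_Y(r+p))=0$ when $\lambda_n=0$ and $p<\min\{\lambda_1,d-2\}$. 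Applying it with $r=|\lambda|$ and $p=0$ requires $0<\deg\tilde D-2=\delta-2$, i.e.\ $\delta\ge3$: this, and not some Bott-type inequality, is exactly where the hypothesis $\delta\ge 3$ enters, and it is also why the row condition $\lambda_n=0$ is used a second time, at the level of the vanishing theorem itself and not only in the Pieri step. So your proposal reproduces the paper's formal reduction, but the heart of the proof --- the twisted Schur vanishing obtained through the cyclic cover --- is missing, and the one precise claim you make about it (injection into the untwisted $H^0$ of the cover) is false.
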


If we pass to the subbundle $E_{k,m}T^*_{\mathbb P^n}\langle D\rangle$ and add some negativity, we get an immediate corollary which is exactly what we need to conclude the proof of Theorem \ref{logexistence}.

\begin{corollary}
Let $D\subset\mathbb P^n$ be a smooth irreducible divisor of degree $\deg D=\delta$ and $A\to\mathbb P^n$ any ample line bundle. Then
$$
H^0(\mathbb P^n,E_{k,m}T^*_{\mathbb P^n}\langle D\rangle\otimes A^{-1})=0
$$
for all $m\ge 1$ and $1\le k\le n-1$, provided $\delta\ge 3$.
\end{corollary}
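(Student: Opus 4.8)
The plan is to deduce this Corollary formally from Theorem \ref{logvanishing}, since essentially all the geometric content is already contained in the vanishing statement for the full (non-invariant) logarithmic jet differentials; the passage to the invariant subbundle and the incorporation of the anti-ample twist by $A^{-1}$ are both soft. I would organize the argument in two short steps.

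First, I would use that the invariant logarithmic jet differentials form a \emph{subbundle} of the full logarithmic jet differentials. As recalled above, $\mathcal O(E_{k,m}T^*_{\mathbb P^n}\langle D\rangle)$ arises naturally as a subsheaf of $\mathcal J_{k,m}T^*_{\mathbb P^n}\langle D\rangle$, so tensoring the inclusion by the locally free sheaf $A^{-1}$ preserves injectivity and yields
$$
E_{k,m}T^*_{\mathbb P^n}\langle D\rangle\otimes A^{-1}\hookrightarrow\mathcal J_{k,m}T^*_{\mathbb P^n}\langle D\rangle\otimes A^{-1}.
$$
Taking global sections, a left-exact operation, reduces the claim to showing that $H^0(\mathbb P^n,\mathcal J_{k,m}T^*_{\mathbb P^n}\langle D\rangle\otimes A^{-1})=0$ for $1\le k\le n-1$ and $\delta\ge 3$.

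Second, I would remove the twist by $A^{-1}$ using the effectivity of ample line bundles on projective space. Since $\operatorname{Pic}(\mathbb P^n)=\mathbb Z\cdot\mathcal O(1)$, any ample $A$ is isomorphic to $\mathcal O(a)$ with $a\ge 1$, hence admits a nonzero section $\sigma\in H^0(\mathbb P^n,A)$. Multiplication by $\sigma$ is an injective morphism of sheaves $\mathcal O_{\mathbb P^n}\hookrightarrow A$ over the integral variety $\mathbb P^n$; tensoring with the locally free sheaf $\mathcal J_{k,m}T^*_{\mathbb P^n}\langle D\rangle\otimes A^{-1}$ gives an injection
$$
\mathcal J_{k,m}T^*_{\mathbb P^n}\langle D\rangle\otimes A^{-1}\hookrightarrow\mathcal J_{k,m}T^*_{\mathbb P^n}\langle D\rangle,
$$
and hence an injection on global sections. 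By Theorem \ref{logvanishing} the right-hand space vanishes for all $m\ge 1$, $1\le k\le n-1$ and $\delta\ge 3$, so the left-hand space, and \emph{a fortiori} $H^0(\mathbb P^n,E_{k,m}T^*_{\mathbb P^n}\langle D\rangle\otimes A^{-1})$, is zero as well.

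In this sense there is no genuine obstacle in the Corollary itself: the only non-formal ingredient is Theorem \ref{logvanishing}, whose proof is where the real work lies, presumably via the weight filtration on $\mathcal J_{k,m}T^*_{\mathbb P^n}\langle D\rangle$ reducing the vanishing to that of the symmetric-power graded pieces $S^{\ell_1}T^*_{\mathbb P^n}\langle D\rangle\otimes\cdots\otimes S^{\ell_k}T^*_{\mathbb P^n}\langle D\rangle$, together with the Chern class computation (\ref{chernlog}) and the hypothesis $\delta\ge 3$. The one point I would double-check is that $A^{-1}$ being merely anti-ample, rather than effective, causes no trouble; this is exactly why I invoke the effectivity of $A$ on $\mathbb P^n$ to produce the section $\sigma$ and multiply up, instead of attempting to argue directly with the negative bundle $A^{-1}$.
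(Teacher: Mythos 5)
Your proof is correct and matches the paper's (implicit) argument: the paper treats this Corollary as immediate from Theorem \ref{logvanishing} precisely via the subsheaf inclusion $\mathcal O(E_{k,m}T^*_{\mathbb P^n}\langle D\rangle)\subset\mathcal J_{k,m}T^*_{\mathbb P^n}\langle D\rangle$ together with the observation that twisting by the negative bundle $A^{-1}$ can only cut down global sections, which is exactly what your multiplication-by-$\sigma$ step formalizes. Nothing is missing; the real content is indeed all in Theorem \ref{logvanishing}.
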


So, we begin recalling a vanishing theorem contained in \cite{B-R90} for the twisted Schur powers of the cotangent bundle of a smooth complete intersection.

Let $Y=H_1\cap\cdots\cap H_{N-n}\subset\mathbb P^N$ be an $n$-dimensional smooth complete intersection by the hypersurfaces $H_i\subset\mathbb P^N$, with $d_i=\deg H_i$. 

Let $(\lambda)=(\lambda_1,\dots,\lambda_n)$ be a partition of the integer $r=\lambda_1+\cdots+\lambda_n$, with $\lambda_1\ge\cdots\ge\lambda_n\ge 0$, and $T_{(\lambda)}$ the associated Young tableau. Finally, let $t_i$ be the number of cells inside the $i$-th column of $T_{(\lambda)}$ and set $t=\sum_{i=1}^{N-n}t_i$ (take $t_i=0$ if $i>\operatorname{length}(T)$).

Denote with $\Gamma^{(\lambda)}T^*_Y$ the irreducible representation of $\operatorname{Gl}(T^*_Y)$ of highest weight $(\lambda)$ (we refer the reader to \cite{F-H91} for an excellent overview on representations of the general linear group and related things). Then we have the following:

\begin{theorem}[\cite{B-R90}]\label{vanishtw}
If $p<r+\min\{\operatorname{length}(T_{(\lambda)}),d_1-2,\dots,d_{N-n}-2\}$ and $t<n$, then
$$
H^0(Y,\Gamma^{(\lambda)}T^*_Y\otimes\mathcal O_Y(p))=0.
$$
In particular, if $Y\subset\mathbb P^{n+1}$ is a smooth projective hypersurface of degree $\deg Y=d$, then
$$
H^0(Y,\Gamma^{(\lambda)}T^*_Y\otimes\mathcal O_Y(r+p))=0
$$
if $\lambda_n=0$ and $p<\min\{\lambda_1,d-2\}$.
\end{theorem}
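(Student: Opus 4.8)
The plan is to reduce the vanishing on the complete intersection $Y$ to a cohomology computation on the ambient $\mathbb P^N$, where everything is governed by Bott's formula, and then to descend one hypersurface at a time. Two exact sequences drive the argument. The first is the conormal sequence
$$
0\to\bigoplus_{i=1}^{N-n}\mathcal O_Y(-d_i)\to\Omega^1_{\mathbb P^N}|_Y\to\Omega^1_Y\to 0;
$$
applying the Schur functor $\Gamma^{(\lambda)}$ and using the Littlewood--Richardson/Pieri filtration of $\Gamma^{(\lambda)}$ of an extension (see \cite{F-H91}), the surjection $\Omega^1_{\mathbb P^N}|_Y\twoheadrightarrow\Omega^1_Y$ exhibits $\Gamma^{(\lambda)}\Omega^1_Y$ as the top quotient of $\Gamma^{(\lambda)}(\Omega^1_{\mathbb P^N}|_Y)$, the kernel being filtered by subquotients of the form $\Gamma^{(\nu)}\Omega^1_Y\otimes\mathcal O_Y(-\textstyle\sum_i m_i d_i)$ with $\nu\subsetneq\lambda$, $m_i\ge 0$ and $\sum_i m_i=r-|\nu|\ge 1$. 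The second tool is the Koszul (ideal--sheaf) resolution of $\mathcal O_Y$ inside $\mathbb P^N$, which, tensored with the restriction of an ambient bundle $\mathcal F$, reduces $H^\bullet(Y,\mathcal F|_Y)$ to the groups $H^\bullet(\mathbb P^N,\mathcal F(-\sum_{i\in S}d_i))$.

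For the base of the induction I take $N-n=0$, i.e.\ $Y=\mathbb P^n$. Here $\Gamma^{(\lambda)}\Omega^1_{\mathbb P^n}(p)$ is an irreducible homogeneous bundle, and Bott's formula computes all of its cohomology explicitly. One reads off that $H^0(\mathbb P^n,\Gamma^{(\lambda)}\Omega^1_{\mathbb P^n}(p))=0$ precisely when $p<r+\lambda_1$ — which is exactly the assertion of the theorem when there are no equations, since $\operatorname{length}(T_{(\lambda)})=\lambda_1$ — and, crucially, it also pins down the single degree in which the cohomology can be nonzero. This last point is what the descent needs: the numerical hypotheses involving $d_i-2$ are designed so that after twisting down by $\mathcal O(-d_i)$ the relevant $H^1$ on $\mathbb P^N$ falls outside the Bott window and hence vanishes (this is also where $d_i\ge 3$, i.e.\ $d_i-2\ge 1$, is what makes the admissible range for $p$ nonempty in the intended application).

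The inductive step cuts $Y$ out of a smooth complete intersection $Y'$ of one higher dimension by a single hypersurface of degree $d=d_{N-n}$. Combining the two sequences: the Koszul sequence on $Y'$ controls $H^\bullet(Y,\Gamma^{(\lambda)}(\Omega^1_{Y'}|_Y)(p))$ in terms of $H^\bullet(Y',\Gamma^{(\lambda)}\Omega^1_{Y'}(p))$ and $H^{\bullet+1}(Y',\Gamma^{(\lambda)}\Omega^1_{Y'}(p-d))$, both available by the inductive hypothesis for $Y'$; the conormal filtration then transports the statement from $\Omega^1_{Y'}|_Y$ to $\Omega^1_Y$, the error terms being Schur powers $\Gamma^{(\nu)}\Omega^1_Y$ of strictly smaller weight $\nu\subsetneq\lambda$ twisted by $\mathcal O(-md)$ with $m\ge 1$, which are handled by a secondary induction on $|\lambda|$ with the more negative twist. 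The hypothesis $t=\sum_{i\le N-n}t_i<n$ plays a purely combinatorial role here: it guarantees that, as each hypersurface absorbs at most one column of the tableau $T_{(\lambda)}$, the partitions $\nu$ occurring in the filtration stay in the range where $\Gamma^{(\nu)}\Omega^1_Y\ne 0$ and the associated Bott weights remain regular, so that the thresholds $r+\lambda_1$ and $r+(d_i-2)$ genuinely govern the vanishing.

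The main obstacle I anticipate is not the reduction to $\mathbb P^N$ but the control of \emph{higher} cohomology on $Y$ itself: the conormal filtration produces error terms $H^1(Y,\Gamma^{(\nu)}\Omega^1_Y(p-md))$ that an $H^0$-statement alone cannot tame. To close the induction one must therefore strengthen the assertion to a simultaneous vanishing of $H^0$ together with $H^1$ (indeed of all sufficiently low-degree cohomology) over a precisely delimited range of negative twists, and then check that both descent sequences propagate this stronger statement from $Y'$ to $Y$. The delicate part is the representation-theoretic bookkeeping: tracking how $r$, $\lambda_1$ and $t$ transform under the removal of horizontal strips and under twisting by $\mathcal O(-d_i)$, and verifying that the shifted Bott windows on the ambient $\mathbb P^N$ are exactly avoided. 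That accounting — rather than any single cohomological input — is where the real work lies.
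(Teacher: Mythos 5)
First, a point of order: the paper does \emph{not} prove Theorem~\ref{vanishtw} --- it is imported verbatim from Br\"uckmann--Rackwitz \cite{B-R90} and used as a black box. So your sketch has to stand on its own, and judged that way it is a reasonable plan whose individual ingredients are sound: the conormal sequence for a complete intersection, the Pieri-type filtration of $\Gamma^{(\lambda)}$ of an extension with graded pieces $\Gamma^{(\nu)}\Omega^1_Y\otimes\mathcal O_Y(-\sum_i m_id_i)$, the Koszul reduction to $\mathbb P^N$, the Bott threshold $H^0(\mathbb P^n,\Gamma^{(\lambda)}\Omega^1_{\mathbb P^n}(p))=0$ exactly for $p<r+\lambda_1$ (which checks out on $S^m\Omega^1(p)$ and $\Omega^q(p)$), and the identification $\operatorname{length}(T_{(\lambda)})=\lambda_1$ (number of columns, in the paper's convention).

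The genuine gap is the one you flag yourself and then defer: the induction as described does not close, and closing it is not bookkeeping but the actual content of the theorem. Since $\Gamma^{(\lambda)}\Omega^1_Y$ sits as a \emph{quotient} of $\Gamma^{(\lambda)}(\Omega^1_{\mathbb P^N}|_Y)$, the $H^0$-vanishing you want requires $H^1\bigl(Y,\Gamma^{(\nu)}\Omega^1_Y(p-\sum_i m_id_i)\bigr)=0$ for every filtration piece, and in codimension $N-n$ the Koszul complex forces you to control $H^i$ up to $i=N-n$; you never formulate the strengthened inductive statement, let alone verify it in a first nontrivial case. This is delicate: $H^1(Y,\Omega^1_Y)\ne 0$ for \emph{every} $Y$ (it contains the hyperplane class), and for a surface $Y\subset\mathbb P^3$ Serre duality gives $H^1(Y,\Omega^1_Y(q))\cong H^1(Y,\Omega^1_Y(d-4-q))^*$, which is nonzero in a whole range of negative twists $q$ --- so one must prove that the specific twists produced by the filtration always avoid these degrees, and that this is guaranteed precisely by $t<n$ and $p<r+\min\{\lambda_1,d_1-2,\dots,d_{N-n}-2\}$. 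Your gloss on $t<n$ (``each hypersurface absorbs at most one column'') is likewise only heuristic; its real role is to forbid the tableau from containing enough full columns to split off factors of $\det T^*_Y=K_Y=\mathcal O_Y(\sum d_i-N-1)$, whose positivity for large $d_i$ manufactures sections and makes the vanishing false. As submitted, then, the proposal reduces the theorem to an unstated and unproven stronger assertion; note that \cite{B-R90} sidesteps exactly this difficulty by working directly with explicit ($T$-symmetrical) tensor forms and degree estimates through the Koszul resolution, controlling $H^0$ outright instead of propagating higher cohomology through an induction.
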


From the above theorem, we deduce the following proposition which extends to all dimension a result of El Goul \cite{EG03}.

\begin{proposition}
Let $D\subset\mathbb P^n$ a smooth hypersurface of degree $\deg D\ge 3$. Then
$$
H^0(\mathbb P^n,\Gamma^{(\lambda)}T^*_{\mathbb P^n}\langle D\rangle)=0
$$
for any non increasing $n$-tuple $(\lambda)=(\lambda_1,\dots,\lambda_n)$ with $\lambda_n=0$.
\end{proposition}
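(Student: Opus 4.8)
The plan is to convert the logarithmic problem on the pair $(\mathbb P^n,D)$ into an honest (twisted) Schur-power vanishing problem on a genuine smooth hypersurface in $\mathbb P^{n+1}$, to which Theorem \ref{vanishtw} applies directly. Write $D=\{f=0\}$ with $f\in H^0(\mathbb P^n,\mathcal O(\delta))$. Since $\mathcal O_{\mathbb P^n}(D)=\mathcal O(\delta)$ admits the $\delta$-th root $\mathcal O(1)$, I would form the $\delta$-cyclic cover branched along $D$, namely the degree-$\delta$ hypersurface
$$
Y=\{w^\delta=f(x_0,\dots,x_n)\}\subset\mathbb P^{n+1},\qquad\text{with coordinates }[x_0:\cdots:x_n:w].
$$
First I would check that $Y$ is smooth: a singular point forces $w=0$ and $\nabla f=f=0$, hence maps to $\operatorname{Sing}(D)=\emptyset$. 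The projection $\pi\colon Y\to\mathbb P^n$, $[x:w]\mapsto[x]$, is a finite surjective morphism of degree $\delta$, everywhere defined (on $Y$ one always has $x\ne 0$), ramified along $R=Y\cap\{w=0\}\cong D$, with $\mathcal O_Y(R)=\mathcal O_Y(1)$.

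The heart of the argument is the identity $\pi^*\bigl(T^*_{\mathbb P^n}\langle D\rangle\bigr)\cong T^*_Y\langle R\rangle$. To establish it I would work in local coordinates near $R$: since $D$ is smooth, $z_1=f$ completes to a coordinate system $(z_1,\dots,z_n)$ on $\mathbb P^n$, while $(w,z_2,\dots,z_n)$ are coordinates on $Y$ with $z_1=w^\delta$. Then $\pi^*(dz_1/z_1)=\delta\,dw/w$ and $\pi^*dz_i=dz_i$, so $\pi^*$ carries the local log-frame of $T^*_{\mathbb P^n}\langle D\rangle$ to that of $T^*_Y\langle R\rangle$; away from $R$ the map $\pi$ is étale and both sides reduce to the ordinary cotangent bundle. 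Applying $\Gamma^{(\lambda)}$ and pulling back gives $\pi^*\bigl(\Gamma^{(\lambda)}T^*_{\mathbb P^n}\langle D\rangle\bigr)\cong\Gamma^{(\lambda)}T^*_Y\langle R\rangle$.

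With this in hand the conclusion is short. Since $\pi$ is surjective, pullback of sections is injective, so
$$
H^0\bigl(\mathbb P^n,\Gamma^{(\lambda)}T^*_{\mathbb P^n}\langle D\rangle\bigr)\hookrightarrow H^0\bigl(Y,\Gamma^{(\lambda)}T^*_Y\langle R\rangle\bigr).
$$
Next, the inclusion of logarithmic forms into forms with at worst a simple pole, $T^*_Y\langle R\rangle\hookrightarrow T^*_Y\otimes\mathcal O_Y(R)$, is an isomorphism off $R$ between bundles of the same rank, so applying $\Gamma^{(\lambda)}$ yields an injection $\Gamma^{(\lambda)}T^*_Y\langle R\rangle\hookrightarrow\Gamma^{(\lambda)}T^*_Y\otimes\mathcal O_Y(|\lambda|)$, using $\Gamma^{(\lambda)}(E\otimes L)=\Gamma^{(\lambda)}E\otimes L^{\otimes|\lambda|}$ and $\mathcal O_Y(R)=\mathcal O_Y(1)$. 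Finally I would invoke Theorem \ref{vanishtw} on the degree-$\delta$ hypersurface $Y\subset\mathbb P^{n+1}$: with $r=|\lambda|$, the twist $|\lambda|$ equals $r+p$ for $p=0$, and since $\lambda_n=0$ while $0<\min\{\lambda_1,\delta-2\}$ (here $\lambda\ne 0$ forces $\lambda_1\ge 1$ and $\delta\ge 3$ forces $\delta-2\ge 1$), we obtain $H^0(Y,\Gamma^{(\lambda)}T^*_Y\otimes\mathcal O_Y(|\lambda|))=0$; the chain of injections then forces $H^0(\mathbb P^n,\Gamma^{(\lambda)}T^*_{\mathbb P^n}\langle D\rangle)=0$.

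The step I expect to be the main obstacle is the pullback identity $\pi^*T^*_{\mathbb P^n}\langle D\rangle\cong T^*_Y\langle R\rangle$ together with the bookkeeping that converts the logarithmic poles into exactly the twist $\mathcal O_Y(|\lambda|)$: it is precisely the smallness of this twist (the value $p=0$) that makes Theorem \ref{vanishtw} applicable, and it is here that the hypothesis $\delta\ge 3$, i.e. $\delta-2\ge 1$, is used. One should also note the harmless implicit assumption $\lambda\ne 0$, since for $\lambda=0$ the bundle is $\mathcal O_{\mathbb P^n}$ and the vanishing fails.
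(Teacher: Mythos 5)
Your proposal is correct and follows essentially the same route as the paper: both use the cyclic cover $Y=\{w^\delta=f\}\subset\mathbb P^{n+1}$ branched along $D$, pull back logarithmic forms to obtain forms with a simple pole along the hyperplane section $\{w=0\}\cap Y$ (the paper's computation $dP/P=\delta\,dw/w$ is exactly your local frame calculation), apply the Schur functor to get an injection into $H^0\bigl(Y,\Gamma^{(\lambda)}T^*_Y\otimes\mathcal O_Y(|\lambda|)\bigr)$, and conclude by Theorem \ref{vanishtw} with $r=|\lambda|$ and $p=0$. Your intermediate factorization through $T^*_Y\langle R\rangle$ and the remark about excluding $\lambda=0$ are just slightly more detailed bookkeeping of the same argument.
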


\begin{proof}
Consider the standard ramified covering $\mathbb P^{n+1}\supset\widetilde D\to\mathbb P^n$ associated to $D$: if $D$ is given by the homogeneous equation $P(z_0,\dots,z_n)=0$ of degree $\delta$, then $\tilde D\subset\mathbb P^{n+1}$ is cut out by the single equation $z_{n+1}^\delta=P(z_0,\dots,z_n)$.

If we take pullbacks of logarithmic differential forms on $\mathbb P^n$, we obtain an injection $H^0(\mathbb P^n,T^*_{\mathbb P^n}\langle D\rangle)\hookrightarrow H^0(\tilde D, T^*_{\tilde D}\otimes\mathcal O_{\tilde D}(1))$. This is easily seen, as on $\tilde D$ one has
$$
\left.\frac{dP}{P}\right|_{\tilde D}=\left.\frac{d z_{n+1}^\delta}{z_{n+1}^\delta}\right|_{\tilde D}=\left.\delta\,\frac{dz_{n+1}}{z_{n+1}}\right|_{\tilde D},
$$
so that pullbacks of logarithmic forms downstairs give rise to forms with one simple pole along the hyperplane section $\{z_{n+1}=0\}\cap\tilde D$.  

Now, we just have to apply, given the weight $\lambda$, the Schur functors to the injection $H^0(\mathbb P^n,T^*_{\mathbb P^n}\langle D\rangle)\hookrightarrow H^0(\tilde D, T^*_{\tilde D}\otimes\mathcal O_{\tilde D}(1))$, in order to obtain the new injection
$$
H^0(\mathbb P^n,\Gamma^{(\lambda)}T^*_{\mathbb P^n}\langle D\rangle)\hookrightarrow H^0(\tilde D, \Gamma^{(\lambda)}T^*_{\tilde D}\otimes\mathcal O_{\tilde D}(|\lambda|)),
$$
where $|\lambda|=\lambda_1+\cdots+\lambda_n$.
The proposition follows from Theorem \ref{vanishtw} (with $r=|\lambda|$ and $p=0$).
\end{proof}

\subsubsection{End of the proof of the vanishing} 

To conclude the proof of Theorem \ref{logvanishing}, we just need to exclude --- using the same strategy as in \cite{Div08} --- among the irreducible $\operatorname{Gl}(T^*_{\mathbb P^n}\langle D\rangle)$-representa\-tions of the bundle $\mathcal J_{k,m}T^*_{\mathbb P^n}\langle D\rangle$ with $k<n$, Schur powers of the form 
$$
\Gamma^{(\lambda_1,\dots,\lambda_n)}T^*_{\mathbb P^n}\langle D\rangle
$$ 
with $\lambda_n\ne 0$. This is possible thanks to the following elementary lemma.

\begin{lemma}
Let $V$ be a complex vector space of dimension $n$ and $\lambda=(\lambda_1,\dots,\lambda_n)$ such that $\lambda_1\ge\lambda_2\ge\dots\ge\lambda_n\ge 0$. Then
$$
\Gamma^{(\lambda)} V\otimes S^mV\simeq\bigoplus_\mu\Gamma^{(\mu)} V
$$
as $\operatorname{Gl}(V)$-representations, the sum being over all $\mu$ whose Young diagram $T_{(\mu)}$ is obtained by adding $m$ boxes to the Young diagram $T_{(\lambda)}$ of $\lambda$, with no two in the same column.
\end{lemma}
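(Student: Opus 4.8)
The plan is to recognize this statement as the classical Pieri rule and to reduce its proof to an identity of characters. Since finite-dimensional polynomial representations of $\operatorname{Gl}(V)$ are semisimple and are determined up to isomorphism by their characters, and since the irreducible polynomial representations $\Gamma^{(\mu)}V$ (with $\mu$ ranging over partitions of length at most $n=\dim V$) have as characters the Schur polynomials $s_\mu(x_1,\dots,x_n)$, which are linearly independent, it suffices to verify the corresponding identity in the ring of symmetric polynomials in the eigenvalues $x_1,\dots,x_n$ of a diagonalizable element of $\operatorname{Gl}(V)$. The character of $\Gamma^{(\lambda)}V$ is $s_\lambda$, and the character of the symmetric power $S^mV$ is the complete homogeneous symmetric polynomial $h_m=\sum_{i_1\le\cdots\le i_m}x_{i_1}\cdots x_{i_m}$. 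Thus the whole statement is equivalent to the Pieri identity $s_\lambda\cdot h_m=\sum_\mu s_\mu$, the sum running over those $\mu$ for which $\mu/\lambda$ is a horizontal $m$-strip --- and one checks at once that ``$\mu$ is obtained from $\lambda$ by adding $m$ boxes, no two in the same column'' is exactly the combinatorial description of a horizontal strip of size $m$.

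Next I would prove the Pieri identity combinatorially. Using the description of the Schur polynomial as $s_\lambda=\sum_T x^{\mathrm{wt}(T)}$, summed over semistandard Young tableaux $T$ of shape $\lambda$ with entries in $\{1,\dots,n\}$, and the analogous description of $h_m$ as a sum of monomials indexed by weakly increasing words $w$ of length $m$, the product $s_\lambda\cdot h_m$ becomes the generating series for the pairs $(T,w)$. The row-insertion (RSK) algorithm turns each such pair into a single semistandard tableau of some shape $\mu\supseteq\lambda$; the basic fact that inserting a weakly increasing word adds a horizontal strip, together with the reversibility of the insertion, yields a weight-preserving bijection between these pairs and the semistandard tableaux of shapes $\mu$ with $\mu/\lambda$ a horizontal $m$-strip. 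Summing the monomials gives exactly $s_\lambda\cdot h_m=\sum_\mu s_\mu$, with each such $\mu$ occurring with multiplicity one. Alternatively one may derive the same identity from the Jacobi--Trudi formula combined with the Lindström--Gessel--Viennot lemma, or simply invoke the Pieri rule as stated in \cite{F-H91}.

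A small point to keep in mind when translating back to representations is the truncation to $n$ variables: a horizontal strip may formally produce a partition $\mu$ with more than $n$ parts, but then $s_\mu(x_1,\dots,x_n)=0$ and, correspondingly, $\Gamma^{(\mu)}V=0$ since $\dim V=n$, so such terms drop out of both sides automatically. The main obstacle in a fully self-contained argument is precisely this combinatorial heart, namely establishing the weight-preserving bijection and, with it, both the multiplicity-one statement and the exact shape condition; everything else --- semisimplicity, the identification of the two characters, and the vanishing of $s_\mu$ in $n$ variables for overlong $\mu$ --- is standard and routine.
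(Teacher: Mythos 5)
Your proposal is correct and takes essentially the same route as the paper: the paper's entire proof is the observation that the statement \emph{is} Pieri's formula, citing \cite{F-H91}, which is exactly the identification you make (and you even offer the same citation as one option). The additional material you provide --- reduction to Schur-polynomial characters, the RSK bijection proving the Pieri identity, and the truncation remark for partitions of length greater than $n$ --- is a sound self-contained elaboration of what the paper leaves to the reference, not a different argument.
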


\begin{proof}
This follows immediately by Pieri's formula, see e.g. \cite{F-H91}.
\end{proof}

Note that this implies that among all the irreducible $\operatorname{Gl}(V)$-repre\-sentations of $S^lV\otimes S^m V$, we cannot find terms of type $\Gamma^{(\lambda_1,\dots,\lambda_n)}V$ with $\lambda_i>0$ for $i>2$ (they are all of type $\Gamma^{(l+m-j,j,0,\dots,0)}V$ for $j=0,...,\min\{m,l\}$).

Thus, by induction on the number of factor in the tensor product of symmetric powers,  we easily find:

\begin{corollary}
If $k\le n$, then we have a direct sum decomposition into irreducible $\operatorname{Gl}(V)$-represen\-tations
$$
S^{\ell_1}V\otimes S^{\ell_2}V\otimes\dots\otimes S^{\ell_k}V=\bigoplus_\lambda\nu_\lambda\,\Gamma^{(\lambda)} V,
$$
where $\nu_\lambda\ne0$ only if $\lambda=(\lambda_1,\dots,\lambda_n)$ is such that $\lambda_i=0$ for $i>k$.
\end{corollary}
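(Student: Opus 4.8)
The plan is to argue by induction on the number $k$ of tensor factors, using the previous Lemma (Pieri's formula) as the sole engine. The base case $k=1$ is immediate: $S^{\ell_1}V=\Gamma^{(\ell_1,0,\dots,0)}V$ is already irreducible, and its Young diagram has length at most $1$, so the asserted condition $\lambda_i=0$ for $i>1$ holds trivially.

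For the inductive step, I would suppose the claim holds for $k-1$ factors, so that
$$
S^{\ell_1}V\otimes\cdots\otimes S^{\ell_{k-1}}V=\bigoplus_\lambda\nu_\lambda\,\Gamma^{(\lambda)}V
$$
with $\nu_\lambda\ne 0$ only for partitions $\lambda$ satisfying $\lambda_i=0$ for $i>k-1$. Tensoring with $S^{\ell_k}V$ and applying the previous Lemma termwise, each summand $\Gamma^{(\lambda)}V$ contributes exactly the irreducible pieces $\Gamma^{(\mu)}V$ for which $T_{(\mu)}$ is obtained from $T_{(\lambda)}$ by adding $\ell_k$ boxes, no two in the same column.

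The decisive observation is that this \lq\lq no two in the same column\rq\rq{} condition is precisely the horizontal-strip (interlacing) condition $\mu_{i+1}\le\lambda_i$ for every $i$. Hence, since $\lambda_i=0$ for $i>k-1$, we obtain $\mu_{k+1}\le\lambda_k=0$, so that $\mu_i=0$ for $i>k$; that is, every $\mu$ appearing in the decomposition has Young diagram of length at most $k$. This establishes the claim for $k$ factors. Since $k\le n$ throughout, the length never exceeds $n=\dim V$, so no summand is forced to vanish by dimensional truncation and the bookkeeping stays consistent within the category of $\operatorname{Gl}(V)$-representations.

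I do not anticipate a genuine obstacle here: the statement is a routine consequence of Pieri's rule, and the only point requiring a little care is reading off that adding boxes with no two in the same column can increase the number of nonzero rows by at most one. Starting from the single-row diagram of $S^{\ell_1}V$ and performing $k-1$ such box additions therefore yields diagrams with at most $k$ rows, which is exactly the assertion.
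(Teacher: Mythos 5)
Your proof is correct and takes essentially the same route as the paper: induction on the number of tensor factors, with Pieri's rule (the preceding Lemma) as the only input, the key point being that adding a horizontal strip --- no two boxes in the same column --- increases the number of nonzero rows by at most one. The paper compresses this into the single remark that the argument follows \lq\lq by induction on the number of factors\rq\rq{}, and your interlacing inequality $\mu_{i+1}\le\lambda_i$ is exactly the precise form of that step.
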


So, in our hypotheses, the composition series of $\mathcal J_{k,m}T^*_{\mathbb P^n}\langle D\rangle$ has vanishing $H^0$ group, and Theorem \ref{logvanishing} is proved. 

\subsection{Effective Results for the Existence of Logarithmic Jet Differentials in Low Dimension} 

The get the effective results announced in the statements of Theorem \ref{logexistence}, we just compute the algebraic holomorphic Morse inequalities, for $\mathbf a=(2\cdot 3^{n-2},\dots,6,2,1)\in\mathbb N^n$. Hence we get an explicit polynomial in the variable $d$, which has positive leading coefficient, and we compute its largest positive root.

All this is done by implementing a quite simple code on GP/PARI CALCULATOR Version 2.3.2.
The computation complexity blows-up rapidly and, starting from dimension $6$, our computers were not able to achieve any result in a finite time.

\begin{remark}
Although very natural, we don't know if the weight 
$$
\mathbf a=(2\cdot 3^{n-2},\dots,6,2,1)\in\mathbb N^n
$$ 
we utilize is the best possible.
\end{remark}

Here is the code.
\newline
\newline
\texttt{
/*scratch variable*/ 
\newline 
X
\newline \newline 
/*main formal variables*/
\newline 
c=[c1,c2,c3,c4,c5,c6,c7,c8,c9]  /*Chern classes of V<D> on P\^{}n*/
\newline 
u=[u1,u2,u3,u4,u5,u6,u7,u8,u9]  /*Chern classes of OXk(1)*/
\newline 
v=[v1,v2,v3,v4,v5,v6,v7,v8,v9]  /*Chern classes of Vk on Xk*/
\newline 
w=[w1,w2,w3,w4,w5,w6,w7,w8,w9]  /*formal variables*/
\newline 
e=[0,0,0,0,0,0,0,0,0]  /*empty array for logarithmic Chern classes*/
\newline 
q=[0,0,0,0,0,0,0,0,0]  /*empty array for Chern equations*/
\newline 
 \newline /*main*/
\newline Calcul(dim,order)=
\newline \{
\newline local(j,n,N);
\newline n=dim;
\newline r=dim;
\newline k=order;
\newline N=n+k*(r-1);
\newline H(n);
\newline Chern();
\newline B=2*h*3\^{}(k-1);
\newline A=B+u[k];
\newline for(j=1,k-1,A=A+2*3\^{}(k-j-1)*u[j]);
\newline R=Reduc((A-N*B)*A\^{}(N-1));
\newline C=Integ(R);
\newline print("Calculation for order ", k, " jets on logarithmic projective ", n, "-space");
\newline print("Line bundle A= ", A);
\newline print("Line bundle B= ", B);
\newline print("Chern class of A\^{}", N, "-", N, "*A\^{}", N-1, "*B :");
\newline print(C);
\newline E=Eval(C);
\newline print("Evaluation for degree d logarithmic projective ", n,"-space:");
\newline print(E)
\newline 	\}
\newline
\newline
/*compute Chern relations*/
\newline Chern()=  
\newline \{
\newline local(j,s,t);
\newline q[1]=X\^{}r; for(j=1,r,q[1]=q[1]+c[j]*X\^{}(r-j));
\newline for(s=1,r,v[s]=c[s]);
\newline for(s=r+1,9,v[s]=0);
\newline for(t=1,k-1,\
  \newline for(s=1,r,w[s]=v[s]+(binomial(r,s)-binomial(r,s-1))*u[t]\^{}s;
  \newline   for(j=1,s-1,w[s]=w[s]+\
  \newline     (binomial(r-j,s-j)-binomial(r-j,s-j-1))*v[j]*u[t]\^{}(s-j)));
\newline  for(s=1,r,v[s]=w[s]);
  \newline q[t+1]=X\^{}r; for(j=1,r,q[t+1]=q[t+1]+v[j]*X\^{}(r-j)))
\newline \}
\newline 
\newline /*reduction to Chern classes of (P\^{}n,D)*/
\newline
Reduc(p)= 
\newline \{
\newline local(j,a);
\newline a=p;
\newline for(j=0,k-1,\
\newline   a=subst(a,u[k-j],X);
\newline   a=subst(lift(Mod(a,q[k-j])),X,u[k-j]));
\newline a
\newline  \}
\newline 
\newline /*integration along fibers*/
\newline Integ(p)= 
\newline \{
\newline local(j,a);
\newline a=p;
\newline for(j=0,k-1,\
 \newline  a=polcoeff(a,r-1,u[k-j]));
\newline a
\newline \}
\newline
\newline
/*compute Chern classes of degree d logarithmic projective n-space*/
\newline H(n)= 
\newline  \{
\newline local(j,s);
\newline for(s=1,n,\
  \newline e[s]=d\^{}s;
 \newline  for(j=1,s,e[s]=e[s]+(-1)\^{}j*(d)\^{}(s-j)*binomial(n+1,j));
 \newline  e[s]=(-1)\^{}s*e[s])
\newline  \}
\newline
\newline
 /*evaluation in terms of the degree*/
\newline Eval(p)=
\newline \{
\newline local(a,s);
\newline a=p;
\newline for(s=1,r,a=subst(a,c[s],e[s]));
\newline subst(a,h,1)*d
\newline \}
}

\end{document}